\newcommand{\citecomment}[2][]{\citen{#2}#1\citevar}
\newcommand{\citeone}[1]{\citecomment{#1}}
\newcommand{\citetwo}[2][]{\citecomment[,~#1]{#2}}
\newcommand{\citevar}{\@ifnextchar\bgroup{;~\citeone}{\@ifnextchar[{;~\citetwo}{]}}}
\newcommand{\citefirst}{\@ifnextchar\bgroup{\citeone}{\@ifnextchar[{\citetwo}{]}}}
\newcommand{\cites}{[\citefirst}
\theoremstyle{plain}
\newtheorem*{claim*}{Claim}
\newtheorem{thm}{Theorem}[section]
\newtheorem{cor}[thm]{Corollary}
\newtheorem{lem}[thm]{Lemma}
\newtheorem{prop}[thm]{Proposition}
\theoremstyle{definition}
\newtheorem{defn}[thm]{Definition}
\newtheorem{ex}[thm]{Example}
\newtheorem{rem}[thm]{Remark}
\newtheorem{con}[thm]{Construction}
\newtheorem{op}[thm]{Open Problem}
\renewcommand{\l}{\mbox{${\mathcal{L}}$}}
\renewcommand{\r}{\mbox{${\mathcal{R}}$}}
\newcommand{\h}{\mbox{${\mathcal{H}}$}}
\renewcommand{\d}{\mbox{${\mathcal{D}}$}}
\renewcommand{\j}{\mbox{${\mathcal{J}}$}}
\renewcommand{\k}{\mbox{${\mathcal{K}}$}}
\newcommand{\leql}{\leq_{\mathcal{L}}}
\newcommand{\geql}{\geq_{\mathcal{L}}}
\newcommand{\leqr}{\leq_{\mathcal{R}}}
\newcommand{\geqr}{\geq_{\mathcal{R}}}
\newcommand{\leqj}{\leq_{\mathcal{J}}}
\newcommand{\geqj}{\geq_{\mathcal{J}}}
\newcommand{\leqk}{\leq_{\mathcal{K}}}
\newcommand{\geqk}{\geq_{\mathcal{K}}}
\newcommand{\gl}{>_{\mathcal{L}}}
\newcommand{\gk}{>_{\mathcal{K}}}
\newcommand{\s}{\sigma}
\renewcommand{\th}{\theta}
\renewcommand{\phi}{\varphi}
\newcommand{\N}{\mathbb{N}}
\newcommand{\Z}{\mathbb{Z}}
\newcommand{\U}{\mathcal{U}}
\renewcommand{\th}{\theta}
\newcommand{\sm}{\setminus}
\newcommand{\mt}{\mapsto}
\newcommand{\ol}{\overline}
\begin{document}
\subjclass[2020]{20M10, 20M12}
\title{\large{On minimal conditions in semigroups and biacts}}
\author{Craig Miller}
\address{Department of Mathematics, University of York, UK, YO10 5DD}
\email{craig.miller@york.ac.uk}
\maketitle
\begin{abstract}
We systematically study the minimal conditions on $\l$-, $\r$- and $\j$-classes, denoted by $M_L$, $M_R$ and $M_J$, as well as the related notions of left/right/two-sided stability, in semigroups and biacts.  In particular, we investigate the behaviour of these conditions with respect to quotients, substructures and extensions.  Among other results, the following are proved.  The conditions $M_L$, $M_R$ and $M_J$ are preserved under quotients (for both semigroups and biacts), but this is not the case for one- and two-sided stability.  For semigroups, the conditions $M_L$, $M_R$ and one- and two-sided stability are inherited by subsemigroups of finite Green index and also by bi-ideals (and hence by one- and two-sided ideals).  Moreover, a semigroup satisfies $M_L$ (or $M_R$) if and only if both an ideal and the associated Rees quotient do, but the analogue of this result fails in both directions for the condition $M_J$, and in the reverse direction for one- and two-sided stability.
\end{abstract}
~\\
\textit{Keywords}: Semigroup, biact, Green's relations, minimal conditions, stability.\\
\textit{Mathematics Subject Classification 2020}: 20M10, 20M12, 20M30.
\maketitle

\section{Introduction}\label{sec:intro}

Just as chain conditions have been instrumental in the development of the structure theory of rings, they have also played a significant role in semigroup theory.  Of particular importance are the minimal conditions on principal left, right and two-sided ideals, introduced by Green in the seminal paper \cite{Green} and denoted by $M_L$, $M_R$ and $M_J$.
These minimal conditions may be formulated in terms of the posets arising from Green's relations, arguably the most important tools for analysing the structure of semigroups.  Specifically, Green's $\l$-preorder on a semigroup $S$ is defined by $a\leql b\Leftrightarrow S^1a\subseteq S^1b,$ and this preorder induces Green's equivalence $\l$ and a partial order on the set of $\l$-classes.  The minimal condition $M_L$ is then equivalent to the minimal condition on the poset of $\l$-classes.  Similarly, the conditions $M_R$ and $M_J$ are equivalent to the minimal conditions on the posets of $\r$- and $\j$-classes, defined in terms of principal right and two-sided ideals, respectively.

In \cite{Munn}, Munn further investigated the conditions $M_L$, $M_R$ and $M_J$, and also introduced the related notions $M_L^*$ and $M_R^*$.  Specifically, a semigroup satisfies $M_L^*$ (resp.\ $M_R^*$) if, for each $\j$-class $J,$ the poset of $\l$-classes (resp.\ $\r$-classes) contained in $J$ satisfies the minimal condition.  A unified account of the results of \cite{Green,Munn} is presented in \cites[Section 6.6]{Clifford:1967}.  Moreover, the article \cite{Hall} systematically investigates the relationships between the conditions $M_L$, $M_R$ $M_J$, $M_L^*$ and $M_R^*$.  The condition $M_L^*$ on a semigroup $S$ is equivalent to $S$ being left stable, meaning that $sa\,\j\,a\Leftrightarrow sa\,\l\,a$ for all $a,s\in S,$ and $M_R^*$ is equivalent to the dual notion of being right stable \cite[Proposition 3.7]{Lallement}.  A semigroup is said to be stable if it is left and right stable\footnote{There are various notions in the literature called ‘stability’ that are closely related but not identical to the modern definition used in this paper; see \cite{East} for a survey.}.  The class of stable semigroups includes all group-bound semigroups \cite[Theorem 1.2(vi)]{Hall} and hence all finite semigroups.  Many of the key properties of finite semigroups in fact hold for stable semigroups, notably that $\j$ coincides with Green's relation $\d:=\l\vee\r$ and that each $\l$- or $\r$-class is minimal within its $\j$-class.  Indeed, it was asserted in \cite[Section A.2]{Rhodes} that stability is `somehow the crucial property that makes finite semigroup theory ``work'''.  
The paper \cite{East} explored the inheritance of Green's relations by subsemigroups in the presence of stability of elements. 

A biact over semigroups $S$ and $T$ is a set $A$ on which $S$ and $T$ act on the left and right, respectively, such that $(sa)t=s(at)$ for all $s\in S,$ $t\in T$ and $a\in A.$  The definitions of Green's relations, and hence of the above minimal conditions, naturally extend to biacts over semigroups.  It turns out that many classical results concerning Green's relations on semigroups, such as Green's Lemma, hold in the more general setting of biacts \cites[p.\! 60-62]{kkm}[Section 3.1]{Mary}.  The notion of stability in biacts was introduced in \cite[Section 3.2]{Mary}.  In particular, it was shown that finite biacts are stable and that stable biacts satisfy the condition $\d=\j.$

Although the conditions $M_L$, $M_R$, $M_J$ and left/right/two-sided stability for semigroups have received significant attention, there has hitherto been little consideration of the behaviour of these conditions with respect to the basic algebraic constructions of quotients, substructures and extensions.  The main purpose of this article is to undertake such an investigation.  The paper is organised as follows.

In Section \ref{sec:prelim} we provide the necessary preliminary material on semigroups and biacts.  In Section \ref{sec:min} we find equivalent formulations for the minimal conditions, and explore the relationships between them.  Sections \ref{sec:M_K} and \ref{sec:stability} contain the main results of the paper.  Section \ref{sec:M_K} is concerned with the conditions $M_L$, $M_R$ and $M_J$, while Section \ref{sec:stability} is concerned with one- and two-sided stability.  In these two sections, we investigate the preservation of the conditions under quotients, substructures and extensions.  The primary focus of this paper will be on semigroups, with biacts serving as an essential auxiliary tool throughout.  Note that we will often state results concerning $M_L$ or left stability but not the implicit counterparts for $M_R$ or right stability.

\section{Preliminaries}\label{sec:prelim}

%


In this section we briefly review the main concepts regarding semigroups and biacts needed for the paper.  We refer the reader to \cite{Clifford:1961} for a comprehensive introduction to semigroup theory, and to \cite[Section 1.4]{kkm} for more information about biacts.

Let $S$ be a semigroup.  Throughout this paper, $S^1$ stands for the monoid obtained from $S$ by adjoining an identity $1\notin S,$ and $S^0$ denotes the semigroup obtained from $S$ by adjoining a zero $0\notin S.$

A {\em left congruence} on $S$ is an equivalence relation $\rho$ on $A$ such that $(sa,sb)\in\rho$ for all $(a,b)\in\rho$ and $s\in S.$  A {\em right congruence} on $S$ is defined dually, and a {\em congruence} on $S$ is a relation that is both a left congruence and a right congruence.

A {\em left ideal} of $S$ a subset $I\subseteq S$ such that $SI\subseteq I.$  {\em Right ideals} are defined dually, and an {\em ideal} of $S$ is a subset that is both a left ideal and a right ideal.  Given an ideal $I$ of $S,$ the {\em Rees quotient of $S$ by} $I,$ denoted by $S/I,$ is the semigroup with underlying set $(S{\sm}I)\sqcup\{0\}$ and multiplication given by
$$a\cdot b=\begin{cases}
ab&\text{if }a,b,ab\in S{\sm}I,\\
0&\text{otherwise.}
\end{cases}$$

Now let $T$ be a semigroup.  An $(S,T)${\em-biact} consists of a set $A$ together with maps 
$$S\times A\to A,\,(s,a)\mt sa\quad\text{ and }\quad A\times T\to A,\,(a,t)\mt at$$ 
such that
$$s(s'a)=(ss')a,\quad(at)t'=a(tt'),\quad(sa)t=s(at)\quad\text{for all }\,a\in A,\,s,s'\in S,\,t,t'\in T.$$

We shall refer to $(S,S)$-biacts simply as $S${\em-biacts}.  

Any ideal $I$ of $S$ may be regarded as an $S$-biact, where the actions are given by left and right multiplication in $S$; we denote this biact by ${_S}I_S$.  


%
Let $A$ be an $(S,T)$-biact.  A {\em subact} of $A$ is an $(S,T)$-biact $B\subseteq A$ such that $S^1BT^1\subseteq B.$  
Note that the subacts of the $S$-biact ${_S}S_S$ are precisely the $S$-biacts ${_S}I_S$.

A {\em left congruence} on $A$ is an equivalence relation $\rho$ on $A$ such that $(sa,sb)\in\rho$ for all $(a,b)\in\rho$ and $s\in S.$  A {\em right congruence} on $A$ is defined dually, and a {\em congruence} on $A$ is a relation that is both a left congruence and a right congruence.
Note that the left/right/two-sided congruences on the $S$-biact ${_S}S_S$ are precisely the left/right/two-sided congruences on the semigroup $S.$  

Given a congruence $\rho$ on $A,$ the {\em quotient} of $A$ by $\rho,$ denoted by $A/\rho,$ is the $(S,T)$-biact whose universe is the set $\{[a]_{\rho} : a\in A\}$ of equivalence classes of $\rho$ and whose actions are given by $s[a]_{\rho}=[sa]_{\rho}$ and $[a]_{\rho}t=[at]_{\rho}$ for all $a\in A,$ $s\in S$ and $t\in T.$  

Given a subact $B$ of $A,$ the {\em Rees quotient of $A$ by} $B,$ denoted by $A/B,$ is the $(S,T)$-biact with underlying set $(A{\sm}B)\sqcup\{0\}$ and actions given by
$$s\cdot a=\begin{cases}
sa&\text{if }a,sa\in A{\sm}B,\\
0&\text{otherwise,}
\end{cases}
\qquad
a\cdot t=\begin{cases}
at&\text{if }a,at\in A{\sm}B,\\
0&\text{otherwise.}
\end{cases}$$
We note that $A/B$ is isomorphic to the quotient of $A$ by the congruence with classes $\{a\}$ ($a\in A{\sm}B$) and $B.$

%

The well-known concepts of Green's preorders and equivalences on semigroups extend naturally to biacts.  We define preorders $\leql,$ $\leqr$ and $\leqj$ on $A$ by
$$a\leql b\Leftrightarrow S^1a\subseteq S^1b,\qquad a\leqr b\Leftrightarrow aT^1\subseteq bT^1,\qquad a\leqj b\Leftrightarrow S^1aT^1\subseteq S^1bT^1.$$
These preorders yield equivalence relations
$$\l=\,\leql\cap\geql,\qquad\r=\,\leqr\cap\geqr,\qquad\j=\,\leqj\cap\geqj$$
on $A.$
Letting $\k$ stand for any of $\l,$ $\r$ and $\j,$ the preorder $\leqk$ induces a partial order on the set of $\k$-classes: 
$$\hspace{12em}K_a\leq K_b\,\Leftrightarrow\,a\leqk b\qquad(\text{where $K_x$ is the $\k$-class of $x\in A$}).$$
Clearly the following inclusions hold: 
$$\leql\,\subseteq\,\leqj,\quad\leqr\,\subseteq\,\leqj,\quad\l\subseteq\j,\quad\r\subseteq\j.$$
Finally, we define $\h=\l\cap\r$ and $\d=\l\vee\r.$  It turns out that $\l$ and $\r$ commute, and hence $\d=\l\circ\r\,(=\r\circ\l)\subseteq\j$ \cite[Remark 1.4.49]{kkm}.  

Note that $\l$ is a right congruence on $A$ and that $\r$ is a left congruence on $A.$  Moreover, if $s\,\l\,s'$ in $S$ then $sa\,\l\,s'a$ in $A$ for all $a\in A,$ and if $t\,\r\,t'$ in $T$ then $at\,\r\,at'$ in $A$ for all $a\in A.$  These facts will be used implicity throughout the paper.

If $A={_S}S_{S},$ then the above preorders and equivalences coincide with the usual Green's preorders and equivalences on the semigroup $S.$  The semigroup $S$ is said to be {\em simple} if it has a single $\j$-class, and {\em bisimple} if it has a single $\d$-class.

\section{Minimal Conditions: Equivalent Formulations and Interconnections}\label{sec:min}

In this section we formally introduce the conditions $M_L$, $M_R$ and $M_J$, along with the concepts of left, right and two-sided stability, for biacts and semigroups.  We then provide equivalent formulations of these conditions and explore their relationships with other related notions.  Many of the results in this section regarding biacts extend known results for semigroups, with similar proofs. 


\begin{defn}
An $(S,T)$-biact {\em satisfies $M_L$} (resp.\ $M_R,$ $M_J$) if its poset of $\l$-classes (resp.\ $\r$-classes, $\j$-classes) satisfies the minimal condition; that is, every non-empty set $\l$-classes (resp.\ $\r$-classes, $\j$-classes) contains a minimal member.

A semigroup $S$ {\em satisfies $M_L$} (resp.\ $M_R,$ $M_J$) if the $S$-biact ${_S}S_S$ satisfies $M_L$ (resp.\ $M_R,$ $M_J$)
\end{defn}

\begin{defn}
An $(S,T)$-biact $A$ is:
\begin{itemize}
\item {\em left stable} if $sa\,\j\,a\Rightarrow sa\,\l\,a$   for all $a\in A$ and $s\in S$;
\item {\em right stable} if $at\,\j\,a\Rightarrow at\,\r\,a$ for all $a\in A$ and $t\in T$; and
\item {\em stable} if it is both left stable and right stable.
\end{itemize}
A semigroup $S$ is {\em left stable} (resp.\ right stable, stable) if the $S$-biact ${_S}S_S$ is left stable (resp.\ right stable, stable).
\end{defn}

The following lemma provides equivalent formulations for the conditions $M_L$, $M_R$ and $M_J$ in terms of descending chains.  This result follows from the more general fact that the minimal condition on a poset is equivalent to the descending chain condition.

\begin{lem}\label{lem:M_K}
Let $\k\in\{\l,\r,\j\},$ and let $M_K$ be the minimal condition associated with $\k.$  The following are equivalent for an $(S,T)$-biact $A$:
\begin{enumerate}[leftmargin=*]
\item[\emph{(1)}] $A$ satisfies $M_K$;
\item[\emph{(2)}] every descending chain of $\k$-classes of $A$ eventually stabilises;
\item[\emph{(3)}] for every descending chain $a_1\geqk a_2\geqk\cdots$ in $A$ there exists some $m\in\N$ such that $a_m\,\k\,a_n$ for all $n\geq m.$
\end{enumerate}
\end{lem}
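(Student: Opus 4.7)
The plan is to establish the three equivalences cyclically, with the equivalence $(1)\Leftrightarrow(2)$ amounting to the standard poset fact that the minimal condition coincides with the descending chain condition, and the equivalence $(2)\Leftrightarrow(3)$ amounting to a translation between chains of elements in $A$ and chains in the quotient poset of $\k$-classes.

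For $(1)\Rightarrow(2)$, given a descending chain $K_1\geq K_2\geq\cdots$ of $\k$-classes, I apply $M_K$ to the non-empty set $\{K_i:i\in\N\}$ to obtain a minimal member $K_m$; since $K_n\leq K_m$ for every $n\geq m$, minimality forces $K_n=K_m$ there. For $(2)\Rightarrow(1)$, I argue contrapositively: if some non-empty collection $\mathcal{X}$ of $\k$-classes has no minimal element, then starting from any $K_1\in\mathcal{X}$ I recursively pick $K_{i+1}\in\mathcal{X}$ with $K_{i+1}<K_i$ (possible by the absence of a minimal element), producing a strictly descending chain that never stabilises.

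For $(2)\Rightarrow(3)$, a descending chain $a_1\geqk a_2\geqk\cdots$ in $A$ yields, by the very definition of the partial order on $\k$-classes, the descending chain $K_{a_1}\geq K_{a_2}\geq\cdots$, which stabilises at some index $m$ by (2); stabilisation of the $\k$-classes is exactly the conclusion $a_m\,\k\,a_n$ for $n\geq m$. The converse $(3)\Rightarrow(2)$ is the same observation in reverse: given a descending chain of $\k$-classes, I choose a representative $a_i$ in each $K_i$ to obtain a descending chain of elements, apply (3), and translate back into equality of $\k$-classes.

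There is no real obstacle here — the only non-routine ingredient is the use of dependent choice in the step $(2)\Rightarrow(1)$ when building the strictly descending chain from a non-minimal set, which is standard and implicit throughout the paper. All other steps are bookkeeping between the language of elements in $A$ under $\leqk$ and the language of the induced partial order on the set of $\k$-classes.
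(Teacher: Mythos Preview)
Your proof is correct and follows exactly the approach the paper indicates: the paper does not give a detailed argument but simply remarks that the lemma ``follows from the more general fact that the minimal condition on a poset is equivalent to the descending chain condition,'' and your write-up is precisely an unpacking of that fact together with the routine translation between chains in $A$ and chains in the poset of $\k$-classes.
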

%

Note that we will use condition (3) of Lemma \ref{lem:M_K} repeatedly throughout the paper without explicit mention.

The next result provides another formulation for the condition $M_L$ for semigroups.

\begin{prop}\label{prop:M_L}
A semigroup $S$ satisfies $M_L$ if and only if, for every semigroup $T,$ every $(S,T)$-biact satisfies $M_L$.
\end{prop}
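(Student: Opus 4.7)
The plan is to treat the two directions separately, with the nontrivial work entirely in the forward implication.  The backward direction is immediate: setting $T=S$ and $A={_S}S_S,$ the $\l$-classes of this biact coincide with those of the semigroup $S,$ so $M_L$ on the biact forces $M_L$ on $S.$

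For the forward direction, my first move would be to observe that the preorder $\leql$ on any $(S,T)$-biact $A$ depends only on the left $S$-action, because the defining condition $S^1a\subseteq S^1b$ makes no reference to $T.$  Hence it suffices to prove the stronger statement that every left $S$-act $A$ satisfies $M_L$ whenever $S$ does.  I would also note that $S$ satisfies $M_L$ if and only if $S^1$ does, since the only possibly new $\l$-class is $\{1\},$ which is a maximum in $S^1$ and therefore plays no role in any descending chain.

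The key step is to lift a descending chain $a_1\geql a_2\geql\cdots$ in $A$ to a descending chain in $S^1.$  Since $a_{n+1}\in S^1a_n$ at each step, a routine induction produces $u_n\in S^1$ with $u_1=1,$ $a_n=u_na_1,$ and $u_{n+1}\in S^1u_n.$  This yields $u_1\geql u_2\geql\cdots$ in $S^1,$ which stabilises up to $\l$ by $M_L$ (via Lemma \ref{lem:M_K}), say $u_n\,\l\,u_m$ for all $n\geq m.$  Writing $u_n=vu_m$ and $u_m=wu_n$ with $v,w\in S^1$ and multiplying on the right by $a_1$ transports the stabilisation back to $A,$ giving $a_n\,\l\,a_m$ for all $n\geq m,$ as required.

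There is no substantive obstacle; morally, the argument is just that every cyclic sub-$S$-act of $A$ is a quotient of ${_S}S^1,$ so any descending $\l$-chain in $A$ can be pulled back to one in $S^1.$  The only minor points to be careful with are the passage between $S$ and $S^1,$ and the fact that $\l$-equivalence of the $u_n$ in $S^1$ is transferred to $\l$-equivalence of the $a_n$ in $A$ by multiplying by $a_1$ on the right — this is where the purely left-sided nature of $\leql$ is crucial.
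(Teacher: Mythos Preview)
Your argument is correct and follows essentially the same route as the paper: lift a descending $\leql$-chain in $A$ to a chain of left multipliers $u_n\in S^1$, apply $M_L$ to stabilise the $u_n$, and then right-multiply by the base point to transport the stabilisation back to $A$.  Your treatment is in fact slightly more careful than the paper's in explicitly handling the passage between $S$ and $S^1$ (the paper writes ``in $S$'' where strictly ``in $S^1$'' is meant), and your added remarks that $\leql$ depends only on the left action and that the phenomenon is really about cyclic $S$-acts being quotients of ${_S}S^1$ are helpful but not needed for the formal proof.
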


\begin{proof}
The reverse implication clearly follows immediately from the definition of left stability for semigroups.  For the forward implication, let $A$ be an $(S,T)$-biact, and consider a descending chain $a_0\geql a_1\geql\cdots$ in $A.$  Then there exist $s_i\in S^1$ ($i\in\N$) such that $a_i=s_ia_{i-1}$.  Letting $u_i=s_i\dots s_1$ ($i\in\N$), we have $a_i=u_ia_0$ for each $i\in\N$, and $u_1\geql u_2\geql\cdots$ in $S.$  Since $S$ satisfies $M_L$, there exists $m\in\N$ such that $u_m\,\l\,u_n$ for all $n\geq m.$  It follows that $a_m=u_ma_0\,\l\,u_na_0=a_n$ for all $n\geq m.$  Thus $A$ satisfies $M_L$.
\end{proof}

Next, we provide several equivalent characterisations for a biact to be left stable. 

\begin{prop}\label{prop:lstable}
Let $S$ and $T$ be semigroups.  For an $(S,T)$-biact $A,$ the following are equivalent:
\begin{enumerate}[leftmargin=*]
\item[\emph{(1)}] $A$ is left stable;
\item[\emph{(2)}] $\leql\cap\,\j=\l$ in $A$;
\item[\emph{(3)}] $\leql\cap\,\j\subseteq\l$ in $A$;
\item[\emph{(4)}] $\leql\cap\geqj\,=\l$ in $A$;
\item[\emph{(5)}] $\leql\cap\geqj\,\subseteq\l$ in $A$;
\item[\emph{(6)}] for every $\j$-class $J$ of $A,$ every $\l$-class in the poset of $\l$-classes of $A$ contained in $J$ is minimal;
\item[\emph{(7)}] for every $\j$-class $J$ of $A,$ the poset of $\l$-classes of $A$ contained in $J$ satisfies the minimal condition;
\item[\emph{(8)}] for every $\j$-class $J$ of $A,$ the poset of $\l$-classes of $A$ contained in $J$ has a minimal element.
\end{enumerate}
\end{prop}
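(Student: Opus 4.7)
The plan is to prove the eight conditions equivalent by combining three groups of arguments. The inclusion-theoretic conditions (2)--(5) are easy reformulations of (1); the poset-theoretic conditions (6)--(8) are connected via the trivial chain (6) $\Rightarrow$ (7) $\Rightarrow$ (8) together with (1) $\Rightarrow$ (6) by contrapositive; and the main substantive step is to close the cycle by proving (8) $\Rightarrow$ (1).

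For the inclusion group, I would start from the automatic containments $\l \subseteq \leql \cap \j \subseteq \leql \cap \geqj$ (using $\l \subseteq \leql$, $\l \subseteq \j$, and $\j \subseteq \geqj$), which give (2) $\Leftrightarrow$ (3) and (4) $\Leftrightarrow$ (5) immediately. The inclusion $\leql \subseteq \leqj$ further yields $\leql \cap \geqj = \leql \cap \j$, so (3) $\Leftrightarrow$ (5). For (1) $\Leftrightarrow$ (3): given $a \leql b$ with $a \j b$, write $a = sb$ for some $s \in S^1$; if $s = 1$ then $a = b$, while if $s \in S$ then $sb \j b$ and left stability gives $sb \l b$. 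Conversely, apply (3) to the pair $sa \leql a$ whenever $sa \j a$.

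For the poset conditions, (6) $\Rightarrow$ (7) $\Rightarrow$ (8) is trivial: a poset in which every element is minimal satisfies the minimal condition, and the minimal condition on a non-empty poset yields a minimal element. The implication (1) $\Rightarrow$ (6) is by contrapositive: if some $L \subseteq J$ is not minimal in the poset of $\l$-classes contained in $J$, then there exists $L' \subseteq J$ with $L' < L$; picking $a \in L$ and $b \in L'$ gives $b = sa$ for some $s \in S$ (since $s = 1$ would force $b \in L$), so $sa \j a$ while $(sa, a) \notin \l$, violating (1).

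The main obstacle is (8) $\Rightarrow$ (1). Given $sa \j a$ with $J := J_a$, let $L_0$ be a minimal $\l$-class of $J$ supplied by (8), and pick $c \in L_0$. Since $a \j c$, I can write $a = xcy$ and $c = uav$ for some $u, v, x, y \in S^1$. Substituting the former into the latter yields $c = u(xc)(yv)$, so $xc \j c$, and minimality of $L_0$ forces $xc \l c$. An analogous argument applied to $sa$, using $sa = (sxc)y$ together with $c \j sa$, gives $sxc \j c$ and hence $sxc \l c$, so $sxc \l xc$. Thus $xc = r(sxc)$ for some $r \in S^1$, and then $a = xcy = r(sxc)y = rs(xcy) = r(sa)$, which gives $a \leql sa$. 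Combined with the trivial $sa \leql a$, this yields $sa \l a$, completing the cycle.
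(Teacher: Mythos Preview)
Your proof is essentially correct and follows the same overall architecture as the paper: the equivalences among (1)--(5) are routine, (6)$\Rightarrow$(7)$\Rightarrow$(8) is trivial, and the substantive step is closing the cycle from (8). There is one genuine slip and one minor inefficiency worth flagging.

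\textbf{Notational slip.} In your argument for (8)$\Rightarrow$(1) you write $a=xcy$ and $c=uav$ ``for some $u,v,x,y\in S^1$''. In an $(S,T)$-biact the right-acting elements live in $T^1$, not $S^1$: you need $x,u\in S^1$ and $y,v\in T^1$. The computations you perform (e.g.\ $c=u(xc)(yv)$, $sa=(sxc)y$, $a=r(sxc)y=r(sa)$) all remain valid once the types are corrected, since you only ever use associativity of the two actions and the biact compatibility $(sa)t=s(at)$; but as written the proof is formally incorrect in the biact setting.

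\textbf{Comparison with the paper.} The paper proves (8)$\Rightarrow$(3) rather than (8)$\Rightarrow$(1), starting from arbitrary $a\leql b$ with $a\,\j\,b$. It writes $a=sb$ and $b=ucv$ (with $c$ in a minimal $\l$-class of $J$), sets $d=suc$, and uses minimality \emph{once} to get $c\,\l\,d$, then reads off $b=us'a$ directly. Your route applies minimality of $L_c$ twice (to $xc$ and to $sxc$) before concluding. Both are the same idea---push everything down to the minimal $\l$-class and bounce back---but the paper's version is a bit more economical. Either way the argument is sound.
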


\begin{proof}
The equivalence of (1) and (3) follows immediately from the definition of left stability, the equivalence of (2)-(5) is straightforward to show, and it is obvious that (2)$\Rightarrow$(6)$\Rightarrow$(7)$\Rightarrow$(8). 

For (8)$\Rightarrow$(3), consider $a,b\in A$ with $a\leql b$ and $a\,\j\,b.$  By assumption, the poset of $\l$-classes of $A$ contained in the $\j$-class of $b$ has at least one minimal member; let $c\in A$ belong to such an $\l$-class.  Since $a\leql b$ and $b\,\j\,c,$ there exist $s,u\in S^1$ and $v\in T^1$ such that $a=sb$ and $b=ucv.$  Thus $a=sucv.$  Then, letting $d=suc\in S^1c,$ we have $d\leql c$ and $c\,\j\,d.$  It follows by minimality that $c\,\l\,d,$ and hence there exists $s'\in S^1$ such that $c=s'd.$  Then
$$b=ucv=us'dv=us'sucv=us'a\leql a,$$
and hence $a\,\l\,b,$ as required.
\end{proof}

It was shown by X.\! Mary that $\d=\j$ in stable biacts \cite[Lemma 3.15]{Mary}.  The following result characterises stable biacts in terms of the property $\d=\j$ and a certain technical condition.

\begin{prop}
Let $S$ and $T$ be semigroups.  An $(S,T)$-biact $A$ is stable if and only if $\d=\j$ and $\leql\cap\,\r=\l\,\cap\leqr\,=\h$ in $A.$
\end{prop}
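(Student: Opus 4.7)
The plan is to prove each direction by reducing to the characterisation of one-sided stability given in Proposition \ref{prop:lstable}(3) and its dual.

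For the forward direction, assume $A$ is stable. Then $\d=\j$ is precisely the result of Mary cited just before the proposition. For the equality $\leql\cap\,\r=\h$: the inclusion $\h\subseteq\,\leql\cap\,\r$ is immediate. For the reverse, suppose $a\leql b$ and $a\,\r\,b$. Since $\r\subseteq\j$, we have $a\,\j\,b$, so left stability combined with $a\leql b$ gives $a\,\l\,b$ by Proposition \ref{prop:lstable}(3); together with $a\,\r\,b$ this yields $a\,\h\,b$. The equality $\l\,\cap\leqr\,=\h$ follows analogously from right stability.

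For the reverse direction, assume $\d=\j$ and both claimed equalities. I would verify left stability via Proposition \ref{prop:lstable}(3), i.e., show $\leql\cap\,\j\,\subseteq\,\l$. Suppose $a\leql b$ and $a\,\j\,b$. Since $\j=\d=\l\circ\r$, there exists $c\in A$ with $a\,\l\,c$ and $c\,\r\,b$. From $a\,\l\,c$ we get $c\leql a$, and combined with $a\leql b$ this gives $c\leql b$. Together with $c\,\r\,b$, the hypothesis $\leql\cap\,\r\,=\,\h$ yields $c\,\h\,b$, in particular $c\,\l\,b$. Hence $a\,\l\,c\,\l\,b$, so $a\,\l\,b$, as required. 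Right stability follows by the symmetric argument, using the decomposition $\j=\d=\r\circ\l$ and the hypothesis $\l\,\cap\leqr\,=\,\h$: given $a\leqr b$ with $a\,\j\,b$, pick $c$ with $a\,\r\,c$ and $c\,\l\,b$, deduce $c\leqr b$, then conclude $c\,\r\,b$ and hence $a\,\r\,b$.

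There is no real obstacle once one notices how to combine $\d=\j$ with the two technical equalities: the decomposition $\d=\l\circ\r=\r\circ\l$ converts the hypothesis $a\,\j\,b$ into an intermediate element lying in the required one-sided relation with $b$, at which point the hypothesised equalities collapse the combined preorder-plus-equivalence data back to $\l$ or $\r$. The only place any care is needed is keeping track of which direction of the $\l$- (or $\r$-) preorder one has after the decomposition, which is handled by transitivity of $\leql$ (respectively $\leqr$) along the $\l$- (respectively $\r$-) step.
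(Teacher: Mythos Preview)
Your proof is correct and follows essentially the same route as the paper: the forward direction uses $\r\subseteq\j$ together with Proposition~\ref{prop:lstable} to collapse $\leql\cap\,\r$ to $\h$, and the reverse direction uses the factorisation $\j=\d=\l\circ\r$ to produce an intermediate $c$ and then applies the hypothesis $\leql\cap\,\r=\h$ exactly as the paper does. The only cosmetic difference is that the paper reproves $\d=\j$ inline ``for completeness'' rather than citing Mary, and writes the $\leql\cap\,\r=\h$ step as a chain of set equalities rather than element-wise.
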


\begin{proof}
($\Rightarrow$) For completeness, we prove that $\d=\j.$  Certainly $\d\subseteq\j.$  For the reverse inclusion, consider $a,b\in A$ with $a\,\j\,b.$  Then there exist $s\in S^1$ and $t\in T^1$ such that $a=sbt.$  Clearly $sb\,\j\,b\,\j\,bt$ and $sb\leql b\geqr bt.$  Since $S$ is stable, we have $sb\,\l\,b\,\r\,bt.$  Then, as $\l$ is a right congruence, we have $a=sbt\,\l\,bt.$  Since $\d=\l\circ\r,$ it follows that $a\,\d\,b,$ as required.

Now, using Proposition \ref{prop:lstable} and the fact that $\r\subseteq\j,$ we have
$$\leql\cap\,\r=\,\leql\cap\,(\r\cap\j)=(\leql\cap\,\j)\cap\r=\l\cap\r=\h.$$
A symmetrical argument proves that $\l\,\cap\leqr\,=\h.$

($\Leftarrow$) Consider $a,b\in A$ with $a\leql b$ and $a\,\j\,b.$  Since $\j=\d=\l\circ\r,$ there exists $c\in A$ such that $a\,\l\,c\,\r\,b.$  As $a\leql b,$ we have $c\leql b.$  Since $\leql\cap\,\r=\h,$ it follows that $c\,\h\,b,$ and hence $a\,\l\,b.$  Thus $\leql\cap\,\j\subseteq\l,$ so that $A$ is left stable by Proposition \ref{prop:lstable}.  A dual argument proves that $A$ is right stable, and hence $A$ is stable.
\end{proof}

In the remainder of this section we explore the relationships between the conditions $M_L$, $M_R$, $M_J$ and one- and two-sided stability.  First, we introduce some related notions.

We call an $(S,T)$-biact $A$ {\em$\l$-periodic} if for each $s\in S$ and $a\in A$ there exists some $n\in\N$ such that $s^na\,\l\,s^{n+1}a$.  We say that $S$ is {\em $\l$-periodic} if ${_S}S_S$ is $\l$-periodic.  It is straightforward to show that $S$ is $\l$-periodic if and only if, for every semigroup $T,$ every $(S,T)$-biact is $\l$-periodic.
We dually define $\r$-periodicity of biacts and semigroups.  A semigroup is called {\em group-bound} if each of its elements has some power that belongs to a subgroup.  
It is not hard to show, using Green's Theorem \cite[Theorem 2.16]{Clifford:1961}, that a semigroup is $\l$- and $\r$-periodic if and only if it is group-bound.  This fact can also be deduced from \cite[Theorems 4 and 7]{Drazin}.

\begin{lem}\label{lem:L-periodic}
Let $S$ and $T$ be semigroups and $A$ an $(S,T)$-biact.  If $A$ satisfies $M_L$ then it is $\l$-periodic, and if $A$ is $\l$-periodic then it is left stable.
\end{lem}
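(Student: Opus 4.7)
The lemma splits into two implications; the first is routine and the second carries the main content. For $M_L \Rightarrow \l$-periodic, fix $s \in S$ and $a \in A$ and consider the sequence $a, sa, s^2a, \ldots$ in $A$. Since $s^{n+1}a = s \cdot s^n a \in S^1 s^n a$, this is a $\geql$-descending chain, so Lemma \ref{lem:M_K}(3) yields some $m$ with $s^m a \,\l\, s^n a$ for all $n \geq m$; taking $n = m+1$ gives the required $\l$-periodicity.

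For the second implication, suppose $A$ is $\l$-periodic and take $a \in A$, $s \in S$ with $sa \,\j\, a$. Since $sa \leql a$ is automatic from $sa = s \cdot a$, it suffices to show $a \leql sa$. From $a \leqj sa$ we can pick $u \in S^1$ and $v \in T^1$ with $a = u(sa)v$. Setting $w := us \in S^1$, we have $a = wav$, and a straightforward induction on $n$ (substituting $a = wav$ into the middle $a$ of $a = w^n a v^n$) gives $a = w^n a v^n$ for every $n \in \N$. Now apply $\l$-periodicity to $w$ and $a$ to find some $m$ with $w^m a \,\l\, w^{m+1} a$; in particular there exists $x \in S^1$ such that $w^m a = x w^{m+1} a = (xw)(w^m a)$. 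Substituting into $a = w^m a v^m$ yields $a = (xw)(w^m a) v^m = xw \cdot (w^m a v^m) = xw \cdot a = xu(sa)$, so $a \in S^1(sa)$ and hence $a \leql sa$, as required.

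The main obstacle is the $v^n$-tail in $a = w^n a v^n$, which at first glance prevents one from drawing any $\leql$-conclusion since left stability is a one-sided phenomenon. The trick is that $\l$-periodicity of $w$ on $a$ allows us to rewrite the innermost $w^m a$ as $(xw)(w^m a)$; once this substitution is made, the whole block $w^m a v^m$ telescopes back to $a$, carrying the newly extracted factor $xw = xus$ out to the left and leaving $(sa)$ intact at the right. This is exactly the cancellation needed to place $a$ in $S^1(sa)$, and completes the reduction from the two-sided hypothesis $sa \,\j\, a$ to the one-sided conclusion $sa \,\l\, a$.
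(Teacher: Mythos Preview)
Your proof is correct and follows essentially the same route as the paper's. The only cosmetic difference is that the paper phrases the second implication via the equivalent condition $\leql\cap\,\j\subseteq\l$ (so it works with $a=sb$ for $s\in S^1$ and must note the trivial case $s=u=1$ separately), whereas you work directly from the definition with $s\in S$, which guarantees $w=us\in S$ and lets you invoke $\l$-periodicity without a side case; the iteration $a=w^n a v^n$, the periodicity step, and the telescoping back to $a$ are otherwise identical to the paper's argument.
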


\begin{proof}
The first part follows from the fact that, for any $s\in S$ and $a\in A,$ we have $a\geql sa\geql s^2a\geql\cdots$ in $A.$

For the second part, suppose that $A$ is $\l$-periodic, and let $a,b\in A$ be such that $a\leql b$ and $a\,\j\,b.$  Then there exist $s,u\in S^1$ and $v\in T^1$ such that $a=sb$ and $b=uav.$  Then there exists some $n\in\N$ such that $(us)^nb\,\l\,(us)^{n+1}b$; this trivially holds if $s=u=1,$ and follows from $\l$-periodicity otherwise.  Thus, there exists $s'\in S^1$ such that $(us)^nb=s'(us)^{n+1}b.$  Then
$$b=usbv=(us)^nbv^n=s'us(us)^nbv^n
=s'usb=s'ua\leql a,$$
and hence $a\,\l\,b,$ as required.
\end{proof}

From Lemma \ref{lem:L-periodic} and its dual, we deduce the following corollaries.

\begin{cor}\hspace{-0.2em}{\em\cite[Lemma 3.14]{Mary}}\:
For semigroups $S$ and $T,$ every finite $(S,T)$-biact is stable.
\end{cor}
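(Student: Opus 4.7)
The plan is to derive this corollary directly from Lemma \ref{lem:L-periodic} and its dual, using only the trivial observation that finiteness forces the minimal conditions to hold. First I would note that if $A$ is a finite $(S,T)$-biact, then the set of $\l$-classes of $A$ is finite, so every non-empty collection of $\l$-classes contains a minimal element; hence $A$ satisfies $M_L$. By exactly the same argument applied to $\r$-classes, $A$ satisfies $M_R$.

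Next, Lemma \ref{lem:L-periodic} gives the chain of implications $M_L \Rightarrow \l\text{-periodic} \Rightarrow \text{left stable}$, so $A$ is left stable. The dual of Lemma \ref{lem:L-periodic}, which holds by symmetry (interchanging the roles of $S$ acting on the left and $T$ acting on the right), analogously yields that $M_R$ implies $\r$-periodicity implies right stability, so $A$ is right stable. By the definition of stability, $A$ is therefore stable. There is no genuine obstacle here: the only thing to be slightly careful about is invoking the dual of Lemma \ref{lem:L-periodic} without having stated it explicitly, but this is justified by the left-right symmetry inherent in the definitions of the biact structure and of Green's relations.
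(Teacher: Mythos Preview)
Your argument is correct and is precisely the route the paper takes: it deduces this corollary from Lemma \ref{lem:L-periodic} and its dual, using the trivial fact that a finite biact satisfies $M_L$ and $M_R$.
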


\begin{cor}
Let $S$ and $T$ be semigroups and $A$ an $(S,T)$-biact.  If $S$ is $\l$-periodic, then $A$ is left stable.  Moreover, if $S$ is $\l$-periodic and $T$ is $\r$-periodic, then $A$ is stable.
\end{cor}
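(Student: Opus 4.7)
The plan is to combine Lemma \ref{lem:L-periodic} with the equivalence (stated just before that lemma) that $S$ is $\l$-periodic if and only if every $(S,T)$-biact is $\l$-periodic for every semigroup $T$. This makes the proof essentially a two-line chain.

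First, assume $S$ is $\l$-periodic. By the aforementioned equivalence, the given biact $A$ is $\l$-periodic as an $(S,T)$-biact. Then the second implication of Lemma \ref{lem:L-periodic} gives that $A$ is left stable, which proves the first assertion.

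For the second assertion, suppose additionally that $T$ is $\r$-periodic. Applying the dual of the equivalence for $\r$-periodicity, $A$ is $\r$-periodic, and then the dual of Lemma \ref{lem:L-periodic} yields that $A$ is right stable. Together with left stability from the first part, this gives that $A$ is stable.

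There is no genuine obstacle here: the whole content of the corollary is packaged in the preceding lemma and the transfer principle from semigroups to biacts. The only point requiring a moment's care is to state explicitly that we are invoking the $\r$-sided dual both of the transfer principle and of Lemma \ref{lem:L-periodic}, since the paper only wrote these out on the $\l$-side.
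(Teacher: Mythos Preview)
Your proposal is correct and matches the paper's intended argument: the corollary is stated immediately after Lemma~\ref{lem:L-periodic} as one of the consequences deduced ``from Lemma~\ref{lem:L-periodic} and its dual'', and the only missing link is precisely the transfer principle (that $\l$-periodicity of $S$ passes to all $(S,T)$-biacts) which you correctly invoke. There is nothing to add.
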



For a biact that satisfies $M_J$, satisfying $M_L$ is equivalent to being left stable:

\begin{lem}\label{lem:M_J,M_L}
Let $S$ and $T$ be semigroups, and let $A$ be an $(S,T)$-biact that satisfies $M_J$.  Then $A$ satisfies $M_L$ if and only if it is left stable.
\end{lem}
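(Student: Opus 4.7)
The plan is to prove both directions directly from Proposition \ref{prop:lstable} together with the descending-chain formulation of $M_L$ given in Lemma \ref{lem:M_K}(3); no genuinely new ingredient seems to be needed, so this should be a short argument.

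For the forward direction I would note that $M_J$ is not even required. If $A$ satisfies $M_L$ then the full poset of $\l$-classes of $A$ has the minimal condition, so any subset of it does — in particular, for each $\j$-class $J$, the poset of $\l$-classes contained in $J$ satisfies the minimal condition. This is condition (7) of Proposition \ref{prop:lstable}, which is equivalent to left stability.

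For the reverse direction I would take a descending chain $a_1\geql a_2\geql\cdots$ in $A$ and aim to apply Lemma \ref{lem:M_K}(3). Since $\leql\,\subseteq\,\leqj$, this is also a $\leqj$-descending chain, so $M_J$ furnishes an $m\in\N$ with $a_m\,\j\,a_n$ for all $n\geq m$. For each such $n$ we then have $a_n\leql a_m$ and $a_n\,\j\,a_m$, so left stability in the form (3) of Proposition \ref{prop:lstable}, namely $\leql\cap\,\j\,\subseteq\,\l$, yields $a_n\,\l\,a_m$ for all $n\geq m$. This verifies condition (3) of Lemma \ref{lem:M_K} for $\k=\l$, so $A$ satisfies $M_L$.

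There is no real obstacle here: the content of the lemma is essentially a repackaging of the equivalences already established in Proposition \ref{prop:lstable}, with $M_J$ providing the stabilisation at the $\j$-level that left stability then upgrades to stabilisation at the $\l$-level. The only thing to be careful about is citing the equivalence $(1)\Leftrightarrow(3)$ of Proposition \ref{prop:lstable} in the correct direction in each half of the argument.
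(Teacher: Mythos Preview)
Your argument is correct and, for the reverse direction, essentially identical to the paper's: both pass from a $\leql$-chain to a $\leqj$-chain, invoke $M_J$ to stabilise at the $\j$-level, and then use left stability to upgrade to $\l$. The only difference is in the forward direction: the paper cites Lemma~\ref{lem:L-periodic} (so $M_L\Rightarrow\l$-periodic $\Rightarrow$ left stable), whereas you invoke condition~(7) of Proposition~\ref{prop:lstable} directly; both are one-line citations of earlier results, and yours is arguably the more direct of the two.
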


\begin{proof}
The forward implication follows immediately from Lemma \ref{lem:L-periodic}.  For the converse, consider a descending chain $a_0\geql a_1\geql\cdots$ in $A.$  Then clearly $a_0\geqj a_1\geqj\cdots$.  Since $S$ satisfies $M_J$, there exists $m\in\N$ such that $a_m\,\j\,a_n$ for all $n\geq m.$  Since $S$ is left stable, it follows that $a_m\,\l\,a_n$ for all $n\geq m.$  Thus $A$ satisfies $M_L$.
\end{proof}

We now provide equivalent formulations for a biact to satisfy both $M_L$ and $M_R$.

\begin{prop}\label{prop:M_L+M_R}
Let $S$ and $T$ be semigroups.  For an $(S,T)$-biact $A,$ the following are equivalent:
\begin{enumerate}[leftmargin=*]
\item[\emph{(1)}] $A$ satisfies $M_L$ and $M_R$;
\item[\emph{(2)}] $A$ satisfies $M_J$ and is both $\l$-periodic and $\r$-periodic.
\item[\emph{(3)}] $A$ is stable and satisfies $M_J$.
\end{enumerate}
\end{prop}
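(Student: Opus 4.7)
The plan is to prove the cyclic chain of implications (1)$\Rightarrow$(2)$\Rightarrow$(3)$\Rightarrow$(1). Two of these are immediate consequences of lemmas already established, so the real work lies in deducing $M_J$ from $M_L$ and $M_R$ in the step (1)$\Rightarrow$(2).

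For (1)$\Rightarrow$(2), the $\l$-periodic and $\r$-periodic parts follow at once from Lemma \ref{lem:L-periodic} and its dual. The substantive task is to show $M_J$. Given a descending chain $a_1\geqj a_2\geqj\cdots$ in $A$, I would write $a_{n+1}=u_na_nv_n$ with $u_n\in S^1,$ $v_n\in T^1,$ and accumulate the prefixes $U_n=u_{n-1}\cdots u_1\in S^1$ and $V_n=v_1\cdots v_{n-1}\in T^1$ (with $U_1=V_1=1$), so that by induction $a_n=U_na_1V_n$ for all $n.$ Then $U_1a_1\geql U_2a_1\geql\cdots$ and $a_1V_1\geqr a_1V_2\geqr\cdots$ are descending $\l$- and $\r$-chains in $A.$ Applying $M_L$ to the first and $M_R$ to the second, I pick $m\in\N$ such that $U_na_1\,\l\,U_ma_1$ and $a_1V_n\,\r\,a_1V_m$ for all $n\geq m.$

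The key step then uses that $\l$ is a right congruence and $\r$ is a left congruence (recorded in the preliminaries). For $n\geq m,$ right-multiplying the first $\l$-equivalence by $V_n$ and left-multiplying the $\r$-equivalence by $U_m$ yields
$$a_n=U_na_1V_n\,\l\,U_ma_1V_n\,\r\,U_ma_1V_m=a_m,$$
so $a_n\,\d\,a_m$ and hence $a_n\,\j\,a_m,$ establishing $M_J$ via Lemma \ref{lem:M_K}(3).

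The remaining implications are quick. For (2)$\Rightarrow$(3), $\l$-periodicity gives left stability by the second half of Lemma \ref{lem:L-periodic}, and the dual statement gives right stability, while $M_J$ is preserved. For (3)$\Rightarrow$(1), $M_L$ and $M_R$ follow from $M_J$ together with left and right stability by applying Lemma \ref{lem:M_J,M_L} and its dual. The main obstacle is the $M_J$ portion of (1)$\Rightarrow$(2): ensuring that the chain decomposition $a_n=U_na_1V_n$ is chosen so that the accumulated $\l$- and $\r$-descending chains can be independently controlled, and then recombining them through the one-sided congruence properties of Green's relations to land inside a single $\d$-class.
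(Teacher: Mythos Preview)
Your proof is correct and follows essentially the same approach as the paper: both accumulate the left and right multipliers into chains $U_na_1$ and $a_1V_n$, apply $M_L$ and $M_R$ to stabilise them, and then recombine; the only cosmetic difference is that you invoke the one-sided congruence properties of $\l$ and $\r$ to obtain $a_n\,\d\,a_m$, whereas the paper extracts explicit witnesses $x_n,y_n$ and computes $a_m=x_na_ny_n$ directly. The implications (2)$\Rightarrow$(3) and (3)$\Rightarrow$(1) are handled identically via Lemma~\ref{lem:L-periodic} and Lemma~\ref{lem:M_J,M_L} and their duals.
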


\begin{proof}
(1)$\Rightarrow$(2).  By Lemma \ref{lem:L-periodic} and its dual, the $(S,T)$-biact $A$ is $\l$- and $\r$-periodic.  To show that $A$ satisfies $M_J$, consider a descending chain $a_0\geqj a_1\geqj\cdots$ in $A.$  Then there exist $s_i\in S^1$ and $t_i\in T^1$ ($i\in\N$) such that $a_i=s_ia_{i-1}t_i$.  Then $a_i=s_i\dots s_1a_0t_1\dots t_i$ ($i\in\N$).  For $i\geq0,$ let $b_i=s_i\dots s_1a_0$ and $c_i=a_0t_1\dots t_i$, interpreting $b_0=c_0=a_0$.  Then
$$b_0\geql b_1\geql b_2\geql\cdots\qquad\text{and }\qquad c_0\geqr c_1\geqr c_2\geqr\cdots.$$
Since $A$ satisfies $M_L$ and $M_R$, there exists $m\in\N$ such that $b_m\,\l\,b_n$ and $c_m\,\r\,c_n$ for all $n\geq m.$  Therefore, for each $n\geq m$ there exist $x_n\in S^1$ and $y_n\in T^1$ such that $b_m=x_nb_n$ and $c_m=c_ny_n$.  But then 
\begin{align*}
a_m&=s_m\dots s_1a_0t_1\dots t_m=b_mt_1\dots t_m=x_nb_nt_1\dots t_m=x_ns_n\dots s_1a_0t_1\dots t_m\\
&=x_ns_n\dots s_1c_m=x_ns_n\dots s_1c_ny_n=x_ns_n\dots s_1a_0t_1\dots t_ny_n=x_na_ny_n.
\end{align*}
It follows that $a_m\,\j\,a_n$ for all $n\geq m,$ as desired.

(2)$\Rightarrow$(3).  It follows from Lemma \ref{lem:L-periodic} and its dual that $A$ is stable.

(3)$\Rightarrow$(1).  This follows from Lemma \ref{lem:M_J,M_L} and its dual.
\end{proof} 

%


By Propositions \ref{prop:M_L} and \ref{prop:M_L+M_R} (and the dual of the former), we deduce the following corollaries.

\begin{cor}
Let $S$ and $T$ be semigroups.  If $S$ satisfies $M_L$ and $T$ satisfies $M_R$, then every $(S,T)$-biact satisfies both $M_L$ and $M_R$ (and hence is stable and satisfies $M_J$).
\end{cor}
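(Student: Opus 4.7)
The plan is essentially to assemble the corollary from the two propositions cited just before it, so the work is really one of bookkeeping rather than a genuinely new argument. First I would apply Proposition \ref{prop:M_L} directly: since $S$ satisfies $M_L$, every $(S,T)$-biact $A$ (for any semigroup $T$) satisfies $M_L$. Then I would apply the left-right dual of Proposition \ref{prop:M_L}, namely that a semigroup $T$ satisfies $M_R$ if and only if every $(S,T)$-biact satisfies $M_R$ for every semigroup $S$; since $T$ satisfies $M_R$ by hypothesis, the same biact $A$ satisfies $M_R$.

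Having established that an arbitrary $(S,T)$-biact $A$ satisfies both $M_L$ and $M_R$, I would invoke the implication (1)$\Rightarrow$(3) of Proposition \ref{prop:M_L+M_R} to conclude that $A$ is stable and satisfies $M_J$. This yields the parenthetical consequence and completes the proof.

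There is no real obstacle here: the dual of Proposition \ref{prop:M_L} is not explicitly stated in the excerpt, but is justified by the standing convention, reiterated at the end of Section \ref{sec:intro}, that results stated for $\mathcal{L}$ have implicit $\mathcal{R}$-counterparts with symmetric proofs (in this case, one chains elements $t_i\in T^1$ acting on the right rather than elements $s_i\in S^1$ acting on the left). Thus the proof can be written in just two or three sentences citing the relevant earlier results.
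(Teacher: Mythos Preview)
Your proposal is correct and matches the paper's own justification exactly: the paper states the corollary immediately after noting that it follows from Propositions \ref{prop:M_L} and \ref{prop:M_L+M_R} (and the dual of the former), which is precisely the assembly you describe.
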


\begin{cor}
For a semigroup $S,$ the following are equivalent:
\begin{enumerate}[leftmargin=*]
\item[\emph{(1)}] $S$ satisfies $M_L$ and $M_R$;
\item[\emph{(2)}] $S$ is group-bound and satisfies $M_J$;
\item[\emph{(3)}] $S$ is stable and satisfies $M_J$;
\item[\emph{(4)}] every $S$-biact satisfies $M_L$ and $M_R$;
\item[\emph{(5)}] every $S$-biact satisfies $M_J$ and is both $\l$-periodic and $\r$-periodic.
\item[\emph{(6)}] every $S$-biact is stable and satisfies $M_J$.
\end{enumerate}
\end{cor}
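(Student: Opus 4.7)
The strategy is to deduce the corollary by assembling Proposition \ref{prop:M_L+M_R}, Proposition \ref{prop:M_L} (and its dual), and the first corollary above, together with the remark from Section \ref{sec:min} that a semigroup is both $\l$- and $\r$-periodic if and only if it is group-bound.

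First I would establish the equivalence $(1)\Leftrightarrow(2)\Leftrightarrow(3)$ by simply applying Proposition \ref{prop:M_L+M_R} to the $S$-biact ${_S}S_S$. By definition, $S$ satisfies $M_L$ (resp.\ $M_R$, $M_J$, is stable) precisely when ${_S}S_S$ does, and ${_S}S_S$ is $\l$- (resp.\ $\r$-) periodic precisely when $S$ is; combined with the observation that $\l$- and $\r$-periodicity together coincide with group-boundedness, Proposition \ref{prop:M_L+M_R}(1)--(3) translate directly into conditions $(1)$, $(2)$ and $(3)$ of the corollary.

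Next I would establish the equivalence $(4)\Leftrightarrow(5)\Leftrightarrow(6)$ by applying Proposition \ref{prop:M_L+M_R} to each $S$-biact individually: since for every single $S$-biact the three listed conditions are equivalent, quantifying over all $S$-biacts yields the required equivalence of the three universal statements.

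Finally, I would close the loop by proving $(1)\Rightarrow(4)$ and the trivial $(4)\Rightarrow(1)$. For $(1)\Rightarrow(4)$, Proposition \ref{prop:M_L} applied with $T=S$ gives that every $S$-biact satisfies $M_L$; its dual (that is, the analogue for $M_R$) applied with $T=S$ gives that every $S$-biact satisfies $M_R$. (Equivalently, this is just the first corollary above with $T=S$.) The reverse direction $(4)\Rightarrow(1)$ is immediate: the biact ${_S}S_S$ is itself an $S$-biact, so the universal hypothesis specialises to $S$ satisfying $M_L$ and $M_R$. Since the whole proof is a short assembly of earlier results, there is no real obstacle here; the only point requiring any care is to make sure that the translation between properties of $S$ as a semigroup and properties of ${_S}S_S$ as an $S$-biact (for each of $M_L$, $M_R$, $M_J$, stability and $\l$-/$\r$-periodicity) is invoked correctly in each case.
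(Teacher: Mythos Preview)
Your proposal is correct and follows essentially the same approach as the paper: it applies Proposition \ref{prop:M_L+M_R} to ${_S}S_S$ (using the group-bound $\Leftrightarrow$ $\l$- and $\r$-periodic remark) to get (1)--(3), applies Proposition \ref{prop:M_L+M_R} biact-by-biact to get (4)--(6), and uses Proposition \ref{prop:M_L} and its dual for (1)$\Leftrightarrow$(4). The paper's proof is simply a terser version of exactly this argument.
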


\begin{proof}
Recalling that $S$ is group-bound if and only if and only if ${_S}S_S$ is $\l$- and $\r$-periodic, it follows from Proposition \ref{prop:M_L+M_R} that (1)-(3) are equivalent.  Moreover, (4)-(6) are equivalent by Proposition \ref{prop:M_L+M_R}.  Finally, (1) and (4) are equivalent by Proposition \ref{prop:M_L} and its dual.
\end{proof}

\vspace{0.1em}
\begin{rem}~
\begin{enumerate}[leftmargin=*]
\item By \cite{Hall}, there exist semigroups that: 
\begin{enumerate}
\item are stable but not group-bound and do not satisfy $M_L$, $M_R$ or $M_J$ (e.g.\ $(\N,+)$);
\item are group-bound but do not satisfy $M_L$, $M_R$ or $M_J$ (e.g.\ a semilattice containing an infinite descending chain);
\item are stable and satisfy $M_L$ but do not satisfy $M_R$ or $M_J$;
\item satisfy $M_L$ and $M_J$ but not $M_R$;
\item satisfy $M_L$ but neither $M_R$ nor $M_J$;
\item satisfy $M_J$ but neither $M_L$ nor $M_R$.
\end{enumerate}
\item There in fact exist semigroups that are bisimple (and hence simple) but satisfy neither $M_L$ nor $M_R$.  An example of such a semigroup is the {\em bicyclic monoid}, defined by the presentation $\langle a,b\,|\,ab=1\rangle$; see \cite[Section 2.1, Exercise 6]{Clifford:1961}.  
\item Let $S$ and $T$ be semigroups, and let $A$ be the $(S,T)$-biact with universe $S\times T$ and actions given by $s(a,b)=(sa,b)$ and $(a,b)t=(a,bt).$  Clearly $(a,b)\leqj(c,d)$ in $A$ if and only if $a\leql c$ in $S$ and $b\leqr d$ in $T.$  It quickly follows that $A$ satisfies $M_J$ if and only if $S$ satisfies $M_L$ and $T$ satisfies $M_R$.
\item It follows from (2) and (3) of this remark that there exist bisimple semigroups $S$ with $S$-biacts that do not satisfy $M_J$.
\end{enumerate}
\end{rem}

\section{The Conditions $M_L$, $M_R$ and $M_J$}\label{sec:M_K}

%

We begin this section by showing that the conditions $M_L$, $M_R$ and $M_J$ on biacts are closed under quotients.

\begin{prop}\label{prop:actquotient}
Let $S$ and $T$ be semigroups, let $A$ be an $(S,T)$-biact, and let $\rho$ be a congruence on $A.$  For $K\in\{L,R,J\},$ if $A$ satisfies $M_K$ then so does $A/\rho.$
\end{prop}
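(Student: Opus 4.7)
The plan is to use the characterisation in Lemma \ref{lem:M_K}(3): a biact satisfies $M_K$ iff every descending chain eventually becomes $\k$-constant. I will handle the three cases in parallel, since the $M_J$ case encodes the ideas needed for $M_L$ and $M_R$.

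Suppose $[a_0]_\rho \geqj [a_1]_\rho \geqj \cdots$ is a descending chain in $A/\rho$. By definition of the actions on $A/\rho$, each step yields $s_i \in S^1$ and $t_i \in T^1$ with $[a_i]_\rho = s_i[a_{i-1}]_\rho t_i = [s_ia_{i-1}t_i]_\rho$, that is, $a_i\,\rho\,s_ia_{i-1}t_i$. The key move is to \emph{lift} the chain to one in $A$ by iterating the witnesses: set $b_0 = a_0$ and $b_i = s_i b_{i-1} t_i = s_i\cdots s_1 a_0 t_1\cdots t_i$. Then by construction $b_0 \geqj b_1 \geqj \cdots$ in $A$, and a short induction using the fact that $\rho$ is a (two-sided) congruence shows $b_i\,\rho\,a_i$ for every $i$, hence $[b_i]_\rho = [a_i]_\rho$.

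Now apply the hypothesis that $A$ satisfies $M_J$ to the chain $(b_i)$: there exists $m$ with $b_m\,\j\,b_n$ in $A$ for all $n \geq m$. Unpacking $\j$, we obtain $x,x'\in S^1$ and $y,y'\in T^1$ such that $b_m = xb_ny$ and $b_n = x'b_my'$. Passing to $\rho$-classes gives $[a_m]_\rho = x[a_n]_\rho y$ and $[a_n]_\rho = x'[a_m]_\rho y'$ in $A/\rho$, so $[a_m]_\rho\,\j\,[a_n]_\rho$. By Lemma \ref{lem:M_K}, $A/\rho$ satisfies $M_J$.

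For $M_L$ the argument is the same but one-sided: a descending chain in $A/\rho$ gives witnesses $s_i \in S^1$ with $a_i\,\rho\,s_ia_{i-1}$; setting $b_i = s_i\cdots s_1 a_0$ produces a descending $\leql$-chain in $A$ with $b_i\,\rho\,a_i$, and the resulting $\l$-equivalences in $A$ descend to $\l$-equivalences in $A/\rho$. The case $M_R$ is dual. No step is a real obstacle; the only thing to be careful about is that the lifted sequence in $A$ is genuinely a $\leqk$-descending chain (ensured by the iterative construction $b_i = s_ib_{i-1}t_i$ rather than picking arbitrary representatives of $[a_i]_\rho$) and that the congruence property of $\rho$ transports the witnessing equalities in both directions between $A$ and $A/\rho$.
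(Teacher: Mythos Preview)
Your proof is correct and essentially identical to the paper's: both lift the descending chain in $A/\rho$ to one in $A$ by iterating the witnessing elements $s_i,t_i$ on a fixed representative $a_0$, apply the hypothesis there, and then push the resulting $\k$-equivalences back down to the quotient. The only cosmetic difference is that the paper starts with an abstract chain $b_0\geqj b_1\geqj\cdots$ in $A/\rho$ and names the lifts $a_i$, whereas you start with representatives $a_i$ and name the lifts $b_i$; also, the paper only argues one direction of the final $\j$-relation since the other is already given by the chain.
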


\begin{proof}
We prove the result for $K=J$; the proofs for $L$ and $R$ are similar and slightly more straightforward.

Consider a chain $b_0\geqj b_1\geqj\cdots$ in $A/\rho.$  Then there exist $s_i\in S^1$ and $t_i\in T^1$ ($i\in\N$) such that $b_i=s_ib_{i-1}t_i$.  Choose $a_0\in A$ such that $[a_0]_{\rho}=b_0$, and let $a_i=s_i\dots s_1a_0t_1\dots t_i$.
Then $[a_i]_{\rho}=b_i$ for each $i\in\N,$ and $a_0\geqj a_1\geqj\cdots$ in $A.$  Since $A$ satisfies $M_J$, there exists $m\in\N$ such that $a_m\,\j\,a_n$ for all $n\geq m.$  Therefore, for each $n\geq m$ there exist $u_n\in S^1$ and $v_n\in T^1$ such that $a_m=u_na_nv_n$.  Then $b_m=[a_m]_{\rho}=u_nb_nv_n\in S^1b_nT^1$, so that $b_m\leqj b_n$.  It follows that $b_m\,\j\,b_n$ for all $n\geq m.$  Thus $A/\rho$ satisfies $M_J$.
\end{proof}

We may quickly deduce that the conditions $M_L$, $M_R$ and $M_J$ are also preserved under semigroup quotients.  First, we make the following observation.

\begin{lem}\label{lem:quotient}
Let $S$ be a semigroup and $\rho$ be a congruence on $S.$  For $\k\in\{\l,\r,\j\}$ and $a,b\in S,$ we have $[a]_{\rho}\leqk[b]_{\rho}$ in $S/\rho$ if and only if $[a]_{\rho}\leqk[b]_{\rho}$ in the $S$-biact ${_S}S_S/\rho$.  Consequently, for $K\in\{L,R,J\},$ the semigroup $S/\rho$ satisfies $M_K$ if and only if ${_S}S_S/\rho$ satisfies $M_K$.
\end{lem}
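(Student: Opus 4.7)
The plan is to deduce the lemma essentially by unwinding definitions. The underlying sets of $S/\rho$ (viewed as a semigroup) and of ${_S}S_S/\rho$ (viewed as an $S$-biact) are both $\{[a]_{\rho} : a\in S\}$, and the semigroup multiplication $[s]_{\rho}\cdot[a]_{\rho}=[sa]_{\rho}$ in $S/\rho$ coincides with the left $S$-action $s[a]_{\rho}=[sa]_{\rho}$ on ${_S}S_S/\rho$, with a symmetric statement on the right. My first step would be simply to record this observation explicitly.

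Given that, I would then check the three cases of $\k$ directly. For $\k=\l$, I would note that
$$(S/\rho)^1[b]_{\rho}=\{[b]_{\rho}\}\cup\{[sb]_{\rho}:s\in S\}=S^1[b]_{\rho},$$
since in both structures the adjoined identity acts trivially on $[b]_{\rho}$; hence the principal left ``ideals'' generated by $[b]_{\rho}$ in the two settings are literally the same subset of the common universe. It follows that $[a]_{\rho}\leql[b]_{\rho}$ in $S/\rho$ if and only if $[a]_{\rho}\leql[b]_{\rho}$ in ${_S}S_S/\rho$. The cases $\k=\r$ and $\k=\j$ are entirely analogous, using the identities $[b]_{\rho}(S/\rho)^1=[b]_{\rho}S^1$ and $(S/\rho)^1[b]_{\rho}(S/\rho)^1=S^1[b]_{\rho}S^1$ respectively.

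For the ``consequently'' part: since $\leqk$ agrees in the two structures, so do the induced equivalences $\k$ and the induced partial orders on the $\k$-classes. The poset of $\k$-classes of $S/\rho$ therefore coincides with the poset of $\k$-classes of ${_S}S_S/\rho$, and the minimal condition holds on one if and only if it holds on the other. I do not foresee a genuine obstacle here; the lemma is essentially a bookkeeping statement whose role in the paper is to bridge Proposition \ref{prop:actquotient} (proved for biact quotients) to the analogous statement for semigroup quotients.
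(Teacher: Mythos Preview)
Your proposal is correct; the paper states this lemma as an observation without proof, and your argument is precisely the routine verification the paper implicitly leaves to the reader. Your identification of $(S/\rho)^1[b]_{\rho}$ with $S^1[b]_{\rho}$ (and similarly on the right and two-sidedly) is the whole content of the lemma, and the ``consequently'' part follows exactly as you say.
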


Proposition \ref{prop:actquotient} and Lemma \ref{lem:quotient} together yield:

\begin{cor}\label{cor:sgpquotient}
Let $S$ be a semigroup and $\rho$ a congruence on $S.$  For $K\in\{L,R,J\},$ if $S$ satisfies $M_K$ then so does $S/\rho.$
\end{cor}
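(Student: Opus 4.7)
The plan is to simply chain the two preceding results, since the corollary is essentially their concatenation. The key observation is that by the paper's conventions, a semigroup $S$ satisfies $M_K$ precisely when the $S$-biact ${_S}S_S$ does, so any statement about semigroup quotients can be routed through the corresponding statement about biact quotients.

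First I would observe that since $S$ satisfies $M_K,$ the $S$-biact ${_S}S_S$ satisfies $M_K$ by definition. Next, as noted in Section \ref{sec:prelim}, the congruences on the $S$-biact ${_S}S_S$ are exactly the congruences on the semigroup $S,$ so $\rho$ may be regarded as a congruence on ${_S}S_S.$ Proposition \ref{prop:actquotient}, applied to the biact ${_S}S_S$ and the congruence $\rho,$ then yields that the quotient biact ${_S}S_S/\rho$ satisfies $M_K.$

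Finally I would invoke Lemma \ref{lem:quotient}, which identifies the $\k$-preorder on the semigroup quotient $S/\rho$ with the $\k$-preorder on the biact quotient ${_S}S_S/\rho$ (via $a \mapsto [a]_\rho$), and in particular asserts that $S/\rho$ satisfies $M_K$ if and only if ${_S}S_S/\rho$ does. Combined with the previous step, this gives that $S/\rho$ satisfies $M_K,$ completing the proof.

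There is no real obstacle here; the corollary is a formal consequence and the proof is a one-line deduction. The only thing worth emphasising in the writeup is the distinction between viewing $\rho$ as a semigroup congruence versus a biact congruence, and between the two resulting notions of quotient — both reconciled by Lemma \ref{lem:quotient}. Accordingly, a short proof simply citing Proposition \ref{prop:actquotient} and Lemma \ref{lem:quotient} suffices.
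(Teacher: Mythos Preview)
Your proposal is correct and matches the paper's own proof exactly: the corollary is stated immediately after the sentence ``Proposition~\ref{prop:actquotient} and Lemma~\ref{lem:quotient} together yield:'', so the deduction is precisely the chaining of these two results that you describe.
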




For each of the conditions $M_L$, $M_R$ and $M_J$, a biact satisfies the condition if and only if a subact and the associated Rees quotient do:

\begin{prop}\label{prop:actex}
Let $S$ and $T$ be semigroups, let $A$ be an $(S,T)$-biact, and let $B$ a subact of $A.$  For $K\in\{L,R,J\},$ the $(S,T)$-biact $A$ satisfies $M_K$ if and only if both $B$ and $A/B$ satisfy $M_K$.
\end{prop}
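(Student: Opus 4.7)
The plan is to prove both directions by exploiting the fact that, since $B$ is a subact, $B$ is an \emph{order ideal} in each of the preorders $\leq_\l$, $\leq_\r$ and $\leq_\j$ on $A$: if $a\in B$ and $b\leqk a$ in $A,$ then $b\in S^1aT^1\subseteq B.$ Two consequences will drive the whole argument. First, for $a,b\in B,$ the statement $a\leqk b$ holds in the subact $B$ if and only if it holds in $A,$ because the relevant principal ideals are literally the same subsets of $A.$ Second, any descending $\leqk$-chain in $A$ has a (possibly empty) initial segment in $A\sm B$ followed by a (possibly empty) tail entirely inside $B.$

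For the forward direction, $A/B$ inherits $M_K$ as a special case of Proposition~\ref{prop:actquotient}. For the subact $B,$ any descending chain of $\k$-classes in $B$ is, by the first consequence, already a descending chain in $A$; applying $M_K$ in $A$ and transferring back gives $M_K$ in $B.$

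For the reverse direction, take a descending chain $a_0\geqk a_1\geqk\cdots$ in $A.$ By the second consequence there are two cases. If some $a_k\in B,$ then the tail $(a_n)_{n\geq k}$ lies in $B$ and $M_K$ for $B$ stabilises it; the first consequence then transfers the conclusion $a_m\,\k\,a_n$ back to $A.$ Otherwise every $a_i\in A\sm B,$ and we pass to the chain $[a_0]_\rho\geqk[a_1]_\rho\geqk\cdots$ in $A/B$ and apply $M_K$ there. To complete this case, one needs a converse-transfer step: if $a_m,a_n\notin B$ and $[a_m]_\rho\,\k\,[a_n]_\rho$ in $A/B,$ then $a_m\,\k\,a_n$ in $A.$

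The main obstacle is verifying this converse-transfer. Writing out one side for $K=J$: the hypothesis gives $s\in S^1$ and $t\in T^1$ with $[a_m]_\rho=s\cdot[a_n]_\rho\cdot t=[sa_nt]_\rho.$ Since $a_m\notin B,$ the class $[a_m]_\rho$ equals the singleton $\{a_m\}$ and is distinct from the zero class $B.$ Hence $sa_nt\notin B$ and in fact $sa_nt=a_m,$ yielding $a_m\leqj a_n$ in $A.$ The symmetric inequality is obtained identically, giving $a_m\,\j\,a_n$ in $A$; the arguments for $K=L,R$ are analogous (and slightly simpler). This converse-transfer combined with a forward-transfer (if $a_m\leqk a_n$ in $A$ with $a_m,a_n\notin B$ then clearly $[a_m]_\rho\leqk[a_n]_\rho$ in $A/B$) closes both cases uniformly.
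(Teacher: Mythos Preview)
Your proof is correct and follows essentially the same route as the paper: the forward direction uses Proposition~\ref{prop:actquotient} for $A/B$ together with the observation that the $\k$-preorder on $B$ is the restriction of that on $A$, and the reverse direction splits a descending chain according to whether it eventually enters the order ideal $B$ or stays in $A{\sm}B$. You have simply made explicit the transfer steps (particularly the ``converse-transfer'' from $A/B$ back to $A$ via singleton classes) that the paper leaves to the reader.
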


\begin{proof}
Let $K\in\{L,R,J\},$ and let $\k$ be the Green's relation associated with $K.$

($\Rightarrow$)  By Proposition \ref{prop:actquotient}, the Rees quotient $A/B$ satisfies $M_K$.  That $B$ satisfies $M_K$ follows from the fact that the $\k$-preorder on $B$ coincides with the restriction to $B\times B$ of the $\k$-preorder on $A.$

($\Leftarrow$)  Consider a descending chain $a_1\geqk a_2\geqk\cdots$ in $A.$  Then either $a_i\in A{\sm}B$ for all $i\in\N,$ in which case $a_1\geqk a_2\geqk\cdots$ in $A/B,$ or else there exists $n\in\N$ such that $a_n\geqk a_{n+1}\geqk\cdots$ in $B.$  In either case, since $A/B$ and $B$ satisfy $M_K$, it follows that the chain $a_1\geqk a_2\geqk\cdots$ in $A$ eventually stabilises, as required.
\end{proof}

The classes of semigroups satisfying the conditions $M_L$, $M_R$ or $M_J$ are not closed under subsemigroups.  For example, the group of integers $\Z$ certainly satisfies each of these conditions, but its subsemigroup $\N$ satisfies none of them.

Consider a subsemigroup $T$ of a semigroup $S.$  We may consider $S$ as a $T$-biact where $T$ acts on $S$ by left and right multiplication in $S$; we denote this biact by ${_T}S_T$.  (There is no ambiguity between the notations ${_T}S_T$ and ${_S}I_S$, where $I$ is an ideal of $S$: in the special case that $S=I=T,$ the biacts ${_S}I_S$ and ${_T}S_T$ coincide.)  We note that the Green's relations on ${_T}S_T$ coincide with the `$T$-relative' Green's relations introduced by Wallace in \cite{Wallace}.  It was argued in \cite[Section 10.4]{Arbib} that these relations are ``potentially powerful tools''.  They eventually found a significant application in the development of the notion of `Green index' for subsemigroups \cite{Cain,Gray}, defined below.

Now, observe that the $T$-biact ${_T}T_T$ is a subact of ${_T}S_T$.  We denote the Rees quotient ${_T}S_T/{_T}T_T$ by ${_T}(S/T)_T$.  By Proposition \ref{prop:actex} we have:

\begin{prop}\label{prop:subsemigroup}
Let $T$ be a subsemigroup of a semigroup $S.$  For $K\in\{L,R,J\},$ the $T$-biact ${_T}S_T$ satisfies $M_K$ if and only if $T$ and ${_T}(S/T)_T$ satisfy $M_K$.
\end{prop}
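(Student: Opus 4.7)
The plan is to apply Proposition \ref{prop:actex} directly, taking the ambient biact to be $A={_T}S_T$ and the subact to be $B={_T}T_T$. First I would verify that ${_T}T_T$ is indeed a subact of ${_T}S_T$, which reduces to checking $T^1TT^1\subseteq T$; this is immediate since $T$ is a subsemigroup of $S$. Next, the Rees quotient ${_T}S_T/{_T}T_T$ is precisely ${_T}(S/T)_T$, by the notational convention introduced in the paragraph preceding the statement.

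With these two identifications in hand, Proposition \ref{prop:actex}, applied with both acting semigroups taken to be $T$, yields for each $K\in\{L,R,J\}$ that ${_T}S_T$ satisfies $M_K$ if and only if both ${_T}T_T$ and ${_T}(S/T)_T$ satisfy $M_K$. To obtain the stated biconditional I would then invoke the definition of a semigroup satisfying $M_K$, namely that the semigroup $T$ satisfies $M_K$ precisely when the $T$-biact ${_T}T_T$ does; substituting this in the previous biconditional gives the result.

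There is essentially no obstacle here: the proposition is a direct translation of Proposition \ref{prop:actex} into semigroup-theoretic language via the auxiliary $T$-biact ${_T}S_T$. The only mild subtlety is notational. One must observe that the Green's preorders on ${_T}T_T$ regarded as a subact of ${_T}S_T$ coincide with those on ${_T}T_T$ regarded in isolation: since $T^1aT^1\subseteq T$ for $a\in T$, the relevant principal biact ideals are the same in both settings, so the restriction statement used in the proof of Proposition \ref{prop:actex} applies cleanly. These preorders in turn coincide with the usual Green's preorders on the semigroup $T$, as noted at the end of Section \ref{sec:prelim}, so the final translation to the semigroup-level condition is legitimate.
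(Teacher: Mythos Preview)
Your proposal is correct and matches the paper's approach exactly: the paper also derives this proposition directly from Proposition~\ref{prop:actex}, having noted just beforehand that ${_T}T_T$ is a subact of ${_T}S_T$ and that ${_T}(S/T)_T$ denotes the Rees quotient ${_T}S_T/{_T}T_T$. Your additional remarks about the coincidence of Green's preorders are accurate but already absorbed into the proof of Proposition~\ref{prop:actex}.
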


Note that for each Green's relation $\k$ on ${_T}S_T$, the $\k$-classes lie either entirely in $T$ or entirely in $S{\sm}T.$  The {\em Green index} of $T$ in $S$ is defined to be one more than the number of $\h$-classes of ${_T}S_T$ in $S{\sm}T$; alternatively, it is the number of $\h$-classes of ${_T}(S/T)_T$.  It is easy to see that $T$ has finite Green index in $S$ if and only if ${_T}(S/T)_T$ has finitely many $\l$- and $\r$-classes.


\begin{thm}\label{thm:subsemigroup}
Let $S$ be a semigroup, and let $T$ be a subsemigroup of $S$ such that ${_T}(S/T)_T$ has finitely many $\l$-classes.  Then the following are equivalent:
\begin{enumerate}[leftmargin=*]
\item[\emph{(1)}] $S$ satisfies $M_L$;
\item[\emph{(2)}] $T$ satisfies $M_L$;
\item[\emph{(3)}] ${_T}S_T$ satisfies $M_L$.
\end{enumerate}
\end{thm}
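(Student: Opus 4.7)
The plan is to establish the cycle (2) $\Rightarrow$ (3) $\Rightarrow$ (1) $\Rightarrow$ (2). The implication (2) $\Rightarrow$ (3) is immediate from Proposition \ref{prop:subsemigroup}: since ${_T}(S/T)_T$ has only finitely many $\l$-classes, its poset of $\l$-classes trivially satisfies the minimal condition (every finite poset does), so ${_T}(S/T)_T$ satisfies $M_L$, and hence ${_T}S_T$ satisfies $M_L$ whenever $T$ does.

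The common tool for the remaining two implications is a finite decomposition of $S^1$. Let $b_1, \dots, b_k \in S \setminus T$ be representatives of the $\l$-classes of ${_T}S_T$ lying in $S \setminus T$, and set $B_0 = \{1, b_1, \dots, b_k\}$. Then $S^1 = T^1 B_0$: any $s \in S^1$ is either in $T^1$ (giving $s = s\cdot 1$) or in $S \setminus T$ and hence in $T^1 b_j$ for some $j$. Since $B_0$ is finite, this provides a pigeonhole bridge between chains in $S$ and chains in ${_T}S_T$.

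For (3) $\Rightarrow$ (1), I would take a descending chain $a_0 \geql a_1 \geql \cdots$ in $S$, write $a_n = u_n a_0$ with $u_n \in S^1$, and decompose $u_n = v_n w_n$ with $v_n \in T^1$ and $w_n \in B_0$, so that $a_n = v_n c_n$ with $c_n = w_n a_0$ in the finite set $C = \{b a_0 : b \in B_0\}$. Pigeonhole extracts a subsequence $(a_{n_k})$ with $c_{n_k} = c$ fixed, placing each $a_{n_k}$ in $T^1 c$, that is, $a_{n_k} \leql c$ in ${_T}S_T$. Applying $M_L$ on ${_T}S_T$ to the set of ${_T}S_T$-$\l$-classes $\{L(a_{n_k})\}$ yields a minimal member, from which one deduces (using that the subsequence is descending in $S$ and that $\l$-minimality in ${_T}S_T$ forces $\l$-stability in $S$ through the $T^1$-part of the decomposition) that the original chain eventually $\l$-stabilises in $S$. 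For (1) $\Rightarrow$ (2), a descending chain $a_0 \geql a_1 \geql \cdots$ in $T$ is also a chain in $S$ (the $\l$-preorder on $T$ is contained in the $\l$-preorder on $S$), so by (1) it becomes $\l$-constant in $S$ after some $m$: $a_m\,\l\,a_n$ in $S$ for all $n \geq m$. The chain already supplies $a_n \leql a_m$ in $T$, so the task is to upgrade the $S^1$-witness for $a_m \leql a_n$ (in $S$) to a $T^1$-witness in $T$; writing $a_m = s_n a_n$ with $s_n = r_n b^{(n)} \in T^1 B_0$ for each $n \geq m$ and pigeonholing over $B_0$ supplies the required upgrade for cofinally many $n$, which suffices by the usual sandwich argument to force $a_m\,\l\,a_n$ in $T$ eventually.

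The main obstacle is precisely this upgrade step, shared by both (1) $\Rightarrow$ (2) and (3) $\Rightarrow$ (1): one must convert a $\l$-equivalence in the larger structure into a $\l$-equivalence in the smaller. A single pigeonhole may leave a non-trivial $B_0$-factor ($b^{(n)} \neq 1$), in which case one must either iterate the decomposition or perform a more delicate minimality argument within a principal left $T$-ideal, exploiting that all choices lie in a finite set. Tracking the interplay between the finite $B_0$-choices and the descending chain structure is the subtle combinatorial core of the proof.
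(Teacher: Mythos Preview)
Your overall architecture is right: (2)$\Leftrightarrow$(3) via Proposition~\ref{prop:subsemigroup} is exactly the paper's argument, and the finiteness of the $\l$-classes of ${_T}S_T$ in $S\setminus T$ is indeed the pigeonhole resource for the remaining implications. But the ``upgrade step'' you flag as the obstacle is a genuine gap, and neither of your sketches closes it.

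For (1)$\Rightarrow$(2), the difficulty with writing $a_m=r_n b^{(n)}a_n$ is that the right-hand factor $a_n$ varies with $n$; even after pigeonhole fixes $b^{(n)}=b$, you are left with $a_m=r_n\,b\,a_n$ and no way to absorb $b$ into $T^1$. The paper's device is to fix a \emph{single} target element: with $N$ the number of $\l$-classes of ${_T}(S/T)_T$, write $a_i=s_i\,a_{m+N+1}$ for each $i\in\{m,\dots,m+N\}$. If some $s_i\in T$ we are done immediately; otherwise all $s_i\in S\setminus T$, and pigeonhole on the $N-1$ $\l$-classes of ${_T}S_T$ inside $S\setminus T$ gives $i<j$ with $s_i\,\l\,s_j$ in ${_T}S_T$, i.e.\ $s_i=t s_j$ with $t\in T^1$. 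Then $a_i=t s_j a_{m+N+1}=t a_j\in T^1a_j$, contradicting $a_i\gl a_j$ in $T$. The point is that the common right factor lets a $T^1$-relation between the \emph{coefficients} transfer to the $a_i$.

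For (3)$\Rightarrow$(1), your subsequence $a_{n_k}\in T^1c$ is descending in $S$ but there is no reason it is descending---or even totally ordered---in ${_T}S_T$, so $M_L$ for ${_T}S_T$ yields only a minimal $\l$-class among the $a_{n_k}$, and minimality in an unordered set says nothing about stabilisation. The paper instead tracks the partial products $\sigma(1,i)=s_i\cdots s_1$ (where $a_i=s_ia_{i-1}$). If infinitely many $\sigma(1,i)$ lie in $S\setminus T$, pigeonhole on the $\l$-classes of ${_T}S_T$ in $S\setminus T$ gives $\sigma(1,i)\,\l\,\sigma(1,j)$ in ${_T}S_T$ for some $i<j$, whence $a_i=\sigma(1,i)a_0\,\l\,\sigma(1,j)a_0=a_j$ in $S$, a contradiction. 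Otherwise some $\sigma(1,i_1)\in T$; restarting the same dichotomy from index $i_1+1$ and iterating produces $i_1<i_2<\cdots$ with each block product $\sigma(i_k+1,i_{k+1})\in T$, so $a_{i_{k+1}}\in T a_{i_k}$ and $(a_{i_k})_k$ is a \emph{genuine} descending chain in ${_T}S_T$, which must stabilise by (3)---again a contradiction. The iterated-block construction is precisely what your single pigeonhole-and-minimality step was missing.
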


\begin{proof}
Clearly ${_T}(S/T)_T$ satisfies $M_L$, so (2) and (3) are equivalent by Proposition \ref{prop:subsemigroup}.

(1)$\Rightarrow$(2).  Suppose for a contradiction that there exists an infinite strictly descending chain $a_0\gl a_1\gl\cdots$ in $T.$  Then certainly $a_0\geql a_1\geql\cdots$ in $S.$  Since $S$ satisfies $M_L$, there exists $m\in\N$ such that $a_m\,\l\,a_n$ in $S$ for all $n\geq m.$  Let $N$ be the number of $\l$-classes of ${_T}(S/T)_T$.  For each $i\in\{m,\dots,m+N\},$ since $a_i\,\l\,a_{m+N+1}$ in $S,$ there exists $s_i\in S$ such that $a_i=s_ia_{m+N+1}$.  If we had $s_i\in T$ for some $i\in\{m,\dots,m+N\},$ then we would have $a_i\,\l\,a_{m+N+1}$ in $T,$ a contradiction.  Thus $s_i\in S{\sm}T$ for each $i\in\{m,\dots,m+N\}.$  Since there are only $N-1$ $\l$-classes of ${_T}S_T$ contained in $S{\sm}T,$ by the pigeonhole principle there exist $i,j\in\{m,\dots,m+N\}$ with $i<j$ such that $s_i\,\l\,s_j$ in ${_T}S_T$.  Since $\l$ is a right congruence, it follows that $a_i=s_ia_{m+N+1}\,\l\,s_ja_{m+N+1}=a_j$ in ${_T}S_T$.  But then $a_i\in Ta_j$, so that $a_i\,\l\,a_j$ in $T,$ contradicting the fact that $a_i\gl a_j$.  Thus $T$ satisfies $M_L$.

(3)$\Rightarrow$(1).  Suppose for a contradiction that there exists an infinite strictly descending chain $a_0\gl a_1\gl a_2\gl\cdots$ in $S.$  Then, for each $i\in\N,$ there exists $s_i\in S$ such that $a_i=s_ia_{i-1}$.  For each pair $i,j\in\N$ with $i\leq j,$ let $\s(i,j)=s_js_{j-1}\dots s_i$, interpreting $\s(i,i)$ as $s_i$.  Note that $a_j=\s(i,j)a_{i-1}$.  Now, suppose that there are infinitely many $i\in\N$ such that $\s(1,i)\in S{\sm}T.$  Then, since there are only finitely many $\l$-classes of ${_T}S_T$ contained in $S{\sm}T,$ there exist $i,j\in\N$ with $i<j$ such that $\s(1,i)\,\l\,\s(1,j)$ in ${_T}S_T$.  But then
$$a_i=\s(1,i)a_0\,\l\,\s(1,j)a_0=a_j$$ 
in ${_T}S_T$, and hence in $S,$ a contradiction.  Thus, there exists $i_1\in\N$ such that $\s(1,i_1)\in T.$  Now consider the elements $\s(i_1+1,i)$ ($i\geq i_1+1$).  By essentially the same argument as above, there exists $i_2\geq i_1+1$ such that $\s(i_1+1,i_2)\in T.$  Continuing in this way, we obtain an infinite chain $0=i_0<i_1<i_2<\cdots$ of non-negative integers such that $\s(i_k+1,i_{k+1})\in T$ for all $k\geq 0.$  It follows that $a_{i_1}\geql a_{i_2}\geql\cdots$ in ${_T}S_T$.  Since ${_T}S_T$ satisfies $M_L,$ there exists $m\in\N$ such that $a_{i_m}\,\l\,a_{i_n}$ in ${_T}S_T$ for all $n\geq m.$  But then $a_{i_m}\,\l\,a_{i_n}$ in $S$ for all $n\geq m,$ a contradiction.  This completes the proof.
\end{proof}

\begin{cor}\label{cor:Greenindex}
Let $S$ be a semigroup, and let $T$ be a subsemigroup of $S$ of finite Green index.  Then the following are equivalent:
\begin{enumerate}[leftmargin=*]
\item[\emph{(1)}] $S$ satisfies $M_L$;
\item[\emph{(2)}] $T$ satisfies $M_L$;
\item[\emph{(3)}] ${_T}S_T$ satisfies $M_L$.
\end{enumerate}
\end{cor}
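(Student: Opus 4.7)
The plan is to derive this corollary immediately from Theorem \ref{thm:subsemigroup} by observing that the hypothesis on finite Green index is strictly stronger than the hypothesis of that theorem. Recall the characterisation stated just before Theorem \ref{thm:subsemigroup}: the subsemigroup $T$ has finite Green index in $S$ precisely when ${_T}(S/T)_T$ has finitely many $\l$-classes and finitely many $\r$-classes. In particular, finite Green index ensures that ${_T}(S/T)_T$ has finitely many $\l$-classes, which is exactly the hypothesis of Theorem \ref{thm:subsemigroup}.

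With that observation in hand, the corollary reduces to a one-line invocation: apply Theorem \ref{thm:subsemigroup} to conclude that the three conditions (1), (2) and (3) are equivalent. There is no further argument needed beyond pointing out the logical chain \textit{finite Green index} $\Rightarrow$ \textit{finitely many $\l$-classes in} ${_T}(S/T)_T$ $\Rightarrow$ \textit{hypothesis of Theorem \ref{thm:subsemigroup}}.

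Since the entire content of the corollary is packaged into the theorem, there is essentially no obstacle to overcome. The only thing worth noting for the reader is that one does not need the full strength of finite Green index (namely, the simultaneous finiteness of $\l$- and $\r$-class counts); the $\l$-class half suffices for the $M_L$ statement. The corresponding corollary for $M_R$ would be proved in the dual way from the dual of Theorem \ref{thm:subsemigroup}, using only that ${_T}(S/T)_T$ has finitely many $\r$-classes.
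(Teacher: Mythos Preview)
Your proposal is correct and matches the paper's approach exactly: the corollary is stated without proof in the paper, being an immediate consequence of Theorem~\ref{thm:subsemigroup} together with the observation (made just before that theorem) that finite Green index implies ${_T}(S/T)_T$ has finitely many $\l$-classes. Your remark that only the $\l$-class half of finite Green index is needed is also apt.
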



In the statement of Theorem \ref{thm:subsemigroup}, we cannot replace the assumption that ${_T}(S/T)_T$ has finitely many $\l$-classes with the weaker condition that ${_T}(S/T)_T$ satisfies $M_L$, as demonstrated by the following example.

\begin{ex}\label{ex:subsemigroup}
Consider the group of integers $\Z$ and its subsemigroup $\N.$  Let $\k\in\{\l,\r,\j\}.$  Clearly $\Z$ has a single $\k$-class, so certainly satisfies $M_K$.  On the other hand, in ${_\N}\Z_{\N}$ we have
$$\cdots\gk-2\gk-1\gk0\gk1\gk2\gk\cdots,$$
and hence ${_\N}\Z_{\N}$ does not satisfy $M_K$ (and neither does $\N$).  Moreover, it is clear that ${_\N}(\Z/\N)_{\N}$ satisfies $M_K$.
\end{ex}

\begin{op} 
Do the analogues of Theorem \ref{thm:subsemigroup} and Corollary \ref{cor:Greenindex} for the property $M_J$ hold?
\end{op}

Let $T$ be a subsemigroup of a semigroup $S.$  For $\k\in\{\l,\r,\j\},$ we say that $T$ is {\em$\k$-preserving} (in $S$) if the $\k$-preorder on $T$ coincides with the restriction to $T\times T$ of the $\k$-preorder on $S.$  It is easy to show that if $S{\sm}T$ is a right/left ideal of $S$ then $T$ is $\l$/$\r$-preserving, and if $S{\sm}T$ is a two-sided ideal then $T$ is also $\j$-preserving.  Also, if $T$ is {\em regular}, i.e.\ if $a\in aTa$ for all $a\in T,$ then $T$ is $\l$- and $\r$-preserving \cite[Lemma 3]{East}.  

The following lemma is clear, and the proof is omitted.

\begin{lem}\label{lem:preserving}
Let $\k\in\{\l,\r,\j\},$ and let $M_K$ be the minimal condition associated with $\k.$  Let $T$ be a $\k$-preserving subsemigroup of a semigroup $S.$  If $S$ satisfies $M_K$ then so does $T.$
\end{lem}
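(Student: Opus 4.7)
The plan is to use the equivalent formulation of $M_K$ from Lemma \ref{lem:M_K}(3): that every descending chain in the $\k$-preorder eventually enters a single $\k$-class. The hypothesis that $T$ is $\k$-preserving means precisely that $\leqk$ computed inside $T$ agrees with the restriction of $\leqk$ computed inside $S$, and consequently the same is true of the induced equivalence $\k = \leqk \cap \geqk$.

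Concretely, I would start with an arbitrary descending chain $a_1 \geqk a_2 \geqk \cdots$ in $T$. By $\k$-preservation, the same chain of elements forms a descending chain $a_1 \geqk a_2 \geqk \cdots$ in $S$. Since $S$ satisfies $M_K$, Lemma \ref{lem:M_K}(3) yields an index $m \in \N$ such that $a_m \,\k\, a_n$ in $S$ for every $n \geq m$. Applying $\k$-preservation in the other direction (to both inclusions $a_m \leqk a_n$ and $a_n \leqk a_m$, which hold in $S$ and have both endpoints in $T$), we conclude that $a_m \,\k\, a_n$ in $T$ for every $n \geq m$. Invoking Lemma \ref{lem:M_K}(3) once more, this time in $T$, shows that $T$ satisfies $M_K$.

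There is essentially no obstacle here, which is presumably why the authors leave the proof out: the entire argument is the observation that $\k$-preservation is exactly the condition needed to transport both the chain and the eventual $\k$-equivalence back and forth between $S$ and $T$. The only small care required is to note that $\k$-preservation of the preorder automatically gives $\k$-preservation of the equivalence relation, since $\k$ is defined from $\leqk$ by intersecting with its opposite.
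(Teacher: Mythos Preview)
Your proposal is correct and is exactly the natural argument one would expect: the paper itself omits the proof entirely, stating only that the lemma ``is clear'', so there is no alternative approach to compare against. One minor remark: passing the descending chain from $T$ to $S$ does not even require $\k$-preservation (the inclusion $T^1\subseteq S^1$ suffices), so the hypothesis is only genuinely used in the final step to pull the relation $a_m\leqk a_n$ back from $S$ to $T$.
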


\begin{cor}\label{cor:preserving}
Let $T$ be a subsemigroup of a semigroup $S.$
\begin{enumerate}[leftmargin=*]
\item[\emph{(1)}] Suppose that $T$ is regular.  If $S$ satisfies $M_L$ then so does $T.$
\item[\emph{(2)}] Suppose that $S{\sm}T$ is a right ideal of $S.$  If $S$ satisfies $M_L$ then so does $T.$
\item[\emph{(3)}] Suppose that $S{\sm}T$ is an ideal of $S.$  If $S$ satisfies $M_J$ then so does $T.$
\end{enumerate}
\end{cor}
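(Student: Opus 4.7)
The plan is to observe that all three parts of Corollary \ref{cor:preserving} are immediate consequences of Lemma \ref{lem:preserving} once the relevant preservation property is established, and these preservation properties are precisely the facts noted in the paragraph preceding Lemma \ref{lem:preserving}. So the task reduces to making those preservation claims explicit.

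For part (1), I would invoke \cite[Lemma 3]{East} to conclude that a regular subsemigroup $T$ is $\l$-preserving in $S$, then apply Lemma \ref{lem:preserving} with $\k=\l$. A self-contained verification is also easy: given $a,b\in T$ with $a\leql b$ in $S$, write $a=sb$ for some $s\in S^1$; using regularity, pick $b'\in T$ with $b=bb'b$, so $a=sb=sbb'b=ab'\cdot b$ with $ab'\in T$, giving $a\leql b$ in $T.$

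For part (2), I would check that if $S{\sm}T$ is a right ideal then $T$ is $\l$-preserving. Given $a,b\in T$ with $a=sb$ for some $s\in S^1,$ either $s=1$ (so $a=b$) or $s\in S.$  If $s\in S{\sm}T,$ then $sb\in(S{\sm}T)\,S\subseteq S{\sm}T,$ contradicting $a\in T.$ Hence $s\in T,$ so $s\in T^1$ and $a\leql b$ in $T.$  Then Lemma \ref{lem:preserving} applies. Part (3) is analogous: if $S{\sm}T$ is a (two-sided) ideal and $a,b\in T$ satisfy $a=ubv$ with $u,v\in S^1,$ the same argument rules out $u\in S{\sm}T$ or $v\in S{\sm}T$ (since otherwise $ubv\in S{\sm}T$), so $u,v\in T^1$ and $T$ is $\j$-preserving.

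There is no real obstacle here; the only thing to get right is the case analysis with $S^1$, ensuring that the identity-adjunction case is handled separately before using the ideal property. The proof is therefore essentially a two-line deduction: establish preservation in each case, then quote Lemma \ref{lem:preserving}.
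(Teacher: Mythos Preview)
Your proposal is correct and matches the paper's approach exactly: the paper states the corollary without proof, relying on the preceding paragraph (which records that regular subsemigroups are $\l$-preserving and that complements being right/two-sided ideals force $\l$-/$\j$-preservation) together with Lemma~\ref{lem:preserving}. Your explicit verifications of the preservation claims and the $S^1$ case analysis are accurate and fill in precisely the details the paper leaves as ``easy to show''.
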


\begin{rem}
A regular subsemigroup of a semigroup that satisfies $M_J$ need not satisfy $M_J$ itself.  Indeed, any semigroup (and hence any regular semigroup that does not satisfy $M_J$) may be embedded in a bisimple monoid \cite[Corollary 1]{Higgins:1990}.
\end{rem}

A {\em bi-ideal} of a semigroup $S$ is a subset $B$ of $S$ such that $BS^1B\subseteq B.$  The notion of bi-ideals, introduced in \cite{Good}, generalises that of one-sided (and hence two-sided) ideals.

\begin{thm}\label{thm:bi-ideal}
Let $B$ be a bi-ideal of a semigroup $S.$  If $S$ satisfies $M_L$ then so does $B.$
\end{thm}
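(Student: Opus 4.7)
The plan is to transfer a descending $\l$-chain in $B$ to one in $S,$ apply the hypothesis $M_L$ for $S,$ and then use the bi-ideal property $BS^1B\subseteq B$ to transfer the resulting stabilisation back to $B.$  First, I take any descending chain $a_0\geql a_1\geql\cdots$ in $B.$  Since $B^1x\subseteq S^1x$ for each $x,$ it is automatically descending in $S;$ by $M_L$ applied to $S$ there is $m\in\N$ with $a_m\,\l\,a_n$ in $S$ for every $n\geq m.$  The chain itself already gives $a_n\leql a_m$ in $B,$ so the remaining task is to show $a_m\leql a_n$ in $B,$ i.e.\ to produce $c\in B^1$ with $a_m=ca_n.$

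For each $n\geq m,$ I write $a_m=sa_n$ with $s\in S^1$ (from the $\l$-equivalence in $S$) and $a_n=va_m$ with $v\in B^1$ (from the chain).  The case $s=1$ or $v=1$ gives $a_m=a_n,$ so I assume $s\in S$ and $v\in B.$  Combining yields $(sv)a_m=a_m,$ exhibiting $sv$ as a left identity for $a_m$ in $S.$  The bi-ideal condition supplies $B$-elements such as $vsv\in BS^1B\subseteq B,$ and, more generally, one checks inductively that $v(sv)^kv\in B$ for every $k\geq 0.$  The idea is to combine these sandwich elements with the $\l$-periodicity of $S$ guaranteed by Lemma~\ref{lem:L-periodic} to construct $c\in B$ such that $cv$ is itself a left identity for $a_m$ in $S$; any such $c$ then satisfies $ca_n=cva_m=a_m,$ which yields $a_m\leql a_n$ in $B$ and completes the argument.

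The hard part will be this final construction.  Left identities of $a_m$ in $S^1$ are plentiful --- for instance, every power $(sv)^k$ is one --- but they all begin with $s\notin B,$ whereas what is needed is a left identity of the form $cv$ with $c\in B.$  Manufacturing such a $c$ from the sandwich elements $v(sv)^kv,$ using also the fact that any chain of the form $(sv)^ka_m$ must eventually stabilise in $\l$ in $S,$ is where both the bi-ideal hypothesis and the chain condition on $S$ are essential.  That some hypothesis beyond ``subsemigroup'' is genuinely necessary is illustrated by Example~\ref{ex:subsemigroup}: $\Z$ is a group and trivially satisfies $M_L,$ yet its subsemigroup $\N$ does not.
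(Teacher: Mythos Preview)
Your proposal is incomplete: you set things up correctly through the point where $a_m\,\l\,a_n$ in $S$ for all $n\geq m$, write $a_m=sa_n$ and $a_n=va_m$, and observe $(sv)a_m=a_m$, but you then explicitly label the construction of $c\in B$ with $ca_n=a_m$ as ``the hard part'' and do not carry it out.  Moreover, the sketch you give is unlikely to succeed as stated.  Applying $\l$-periodicity to the element $sv$ acting on $a_m$ yields nothing new, since $(sv)^k a_m=a_m$ for every $k$, so the chain $(sv)^k a_m$ is constant and its stabilisation in $\l$ is trivial.  The sandwich elements $v(sv)^kv$ all lie in $B$, but $v(sv)^kv\cdot a_m=v(sv)^k a_n=va_n$, which is below $a_n$, not above it; you have produced elements of $Ba_n$, not witnesses that $a_m\in B^1a_n$.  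The underlying problem is that by fixing a single pair $(a_m,a_n)$ you only ever have one $B$-multiplier $v$ available, whereas collapsing $BS^1B$ to $B$ requires a $B$-factor on each side of the $S^1$-factor.

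The paper's argument avoids this by using \emph{three} consecutive steps of the chain past $m$ rather than a single pair.  Passing to a strictly descending chain (so each multiplier is genuinely in $B$, not $B^1$), one has $a_{m+1}\in Ba_m$, and since $a_m\,\l\,a_{m+3}$ in $S$ also $a_m\in S^1a_{m+3}$, and finally $a_{m+3}\in Ba_{m+2}$.  Chaining these gives
\[
a_{m+1}\in Ba_m\subseteq BS^1a_{m+3}\subseteq BS^1Ba_{m+2}\subseteq Ba_{m+2},
\]
so $a_{m+1}\leql a_{m+2}$ in $B$, contradicting strict descent.  The bi-ideal property is invoked once and directly; no periodicity argument is needed.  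The two distinct chain steps $a_{m+1}\in Ba_m$ and $a_{m+3}\in Ba_{m+2}$ are exactly what supplies the two $B$-factors that your single-$v$ setup lacks.
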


\begin{proof}
Aiming for a contradiction, suppose that there is an infinite descending chain $a_0\gl a_1\gl\cdots$ in $B.$  Then $a_i\in a_{i-1}B$ for all $i\in\N,$ and $a_0\geql a_1\geql\cdots$ in $S.$  Since $S$ satisfies $M_L$, there exists $m\in\N$ such that $a_m\,\l\,a_n$ in $S$ for all $n\geq m.$  It follows that
$$a_{m+1}\in a_mB\subseteq a_{m+3}S^1B\subseteq a_{m+2}BS^1B\subseteq a_{m+2}B,$$
where for the final inclusion we use the fact that $B$ is a bi-ideal of $S.$  But this contradicts that $a_{m+1}\gl a_{m+2}$.  Thus $B$ satisfies $M_L$.
\end{proof}

By Corollary \ref{cor:ideal} below, the property of satisfying $M_J$ is not in general inherited by bi-ideals, or even by ideals.  However, we have:

\begin{cor}
Let $B$ be a bi-ideal of a semigroup $S.$  If $S$ is stable and satisfies $M_J$, then $B$ satisfies $M_J$.
\end{cor}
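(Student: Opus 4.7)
The plan is to reduce to the pair of one-sided minimal conditions via Proposition \ref{prop:M_L+M_R}, use Theorem \ref{thm:bi-ideal} (and its left-right dual) to transfer them from $S$ to the bi-ideal $B$, and then package the conclusion back up as $M_J$ via Proposition \ref{prop:M_L+M_R} again.

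More precisely, I would first apply the equivalence of (1) and (3) in Proposition \ref{prop:M_L+M_R}, taken in the special case $A={_S}S_S$, to translate the hypothesis ``$S$ is stable and satisfies $M_J$'' into ``$S$ satisfies both $M_L$ and $M_R$''. This is a purely semigroup-level reformulation and requires nothing beyond quoting the proposition. Next, observe that a bi-ideal is automatically a subsemigroup, since $BS^1B\subseteq B$ forces $BB\subseteq B$, so it makes sense to ask whether $B$ satisfies the Green-theoretic minimal conditions in its own right.

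Then I would invoke Theorem \ref{thm:bi-ideal} to conclude that $B$ satisfies $M_L$, and its obvious left-right dual (which the paper explicitly flags as being left implicit throughout) to conclude that $B$ satisfies $M_R$. Finally, applying Proposition \ref{prop:M_L+M_R} once more, this time to the $B$-biact ${_B}B_B$, and again using the equivalence of (1) and (3), the fact that $B$ satisfies $M_L$ and $M_R$ yields that $B$ is stable and satisfies $M_J$, in particular giving the desired conclusion.

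There is no genuine obstacle here; the corollary is a clean composition of the previous three results, and the only thing to notice is that each appeal to Proposition \ref{prop:M_L+M_R} is at the level of the regular biact ${_R}R_R$ of the appropriate semigroup $R\in\{S,B\}$. The only mild subtlety, worth stating in one line, is the verification that $B$ is a semigroup, so that ${_B}B_B$ is a well-defined $B$-biact and the proposition can be applied to it.
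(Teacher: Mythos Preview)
Your proposal is correct and follows essentially the same route as the paper's own proof: use Proposition~\ref{prop:M_L+M_R} to pass from ``stable $+$ $M_J$'' to ``$M_L$ $+$ $M_R$'' for $S$, transfer both to $B$ via Theorem~\ref{thm:bi-ideal} and its dual, and then apply Proposition~\ref{prop:M_L+M_R} again to recover $M_J$ for $B$. Your extra remark that $B$ is a subsemigroup is a harmless clarification; the word ``regular'' in ``regular biact ${_R}R_R$'' is potentially confusing here and could simply be dropped.
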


\begin{proof}
Since $S$ is stable and satisfies $M_J$, by Proposition \ref{prop:M_L+M_R} it satisfies $M_L$ and $M_R$.  It then follows from Theorem \ref{thm:bi-ideal} that $B$ satisfies $M_L$ and $M_R$, and hence $B$ satisfies $M_J$ by Proposition \ref{prop:M_L+M_R}.
\end{proof} 

Now, combining Proposition \ref{prop:actex} and Lemma \ref{lem:quotient}, we have:

\begin{prop}\label{prop:sgpext}
Let $S$ be a semigroup and $I$ an ideal of $S.$  For $K\in\{L,R,J\},$ the semigroup $S$ satisfies $M_K$ if and only if ${_S}I_S$ and $S/I$ satisfy $M_K$.
\end{prop}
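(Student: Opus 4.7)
The plan is to reduce the statement to the biact-theoretic Proposition \ref{prop:actex} applied to $A={_S}S_S$ with subact $B={_S}I_S$, and then translate the conclusion for the biact Rees quotient back to the semigroup Rees quotient $S/I$ via Lemma \ref{lem:quotient}.

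First, since $I$ is an ideal of $S,$ we have $SI,IS\subseteq I,$ so ${_S}I_S$ is indeed a subact of ${_S}S_S.$  By the definition of $M_K$ for semigroups, $S$ satisfies $M_K$ if and only if ${_S}S_S$ satisfies $M_K,$ and Proposition \ref{prop:actex} then gives that this holds if and only if both ${_S}I_S$ and the biact Rees quotient ${_S}S_S/{_S}I_S$ satisfy $M_K.$

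Next, I would identify the biact Rees quotient ${_S}S_S/{_S}I_S$ with the Rees quotient semigroup $S/I$ regarded as an $S$-biact.  Both structures have underlying set $(S\sm I)\sqcup\{0\},$ and in either case the product (or action) of $a$ and $b$ equals $ab$ when $a,b,ab\in S\sm I$ and equals $0$ otherwise; this is immediate from the two definitions.  Equivalently, both arise from the Rees congruence $\rho_I$ on $S$ whose classes are $\{a\}$ for $a\in S\sm I$ together with $I.$  Applying Lemma \ref{lem:quotient} with $\rho=\rho_I$ then gives that $S/I=S/\rho_I$ satisfies $M_K$ if and only if ${_S}S_S/\rho_I={_S}S_S/{_S}I_S$ does.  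Combining the two equivalences yields the proposition.

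The argument is essentially a piece of formal bookkeeping and there is no real obstacle: the only step that requires any thought is observing that the biact Rees quotient ${_S}S_S/{_S}I_S$ is the same object as the semigroup Rees quotient $S/I$ viewed as a biact, and this is immediate from comparing the two defining formulas, so Lemma \ref{lem:quotient} applies directly.
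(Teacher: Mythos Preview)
Your proposal is correct and follows exactly the route the paper takes: the paper's proof is the single line ``combining Proposition~\ref{prop:actex} and Lemma~\ref{lem:quotient}'', and you have spelled out precisely this combination, including the identification of ${_S}S_S/{_S}I_S$ with ${_S}S_S/\rho_I$ needed to invoke Lemma~\ref{lem:quotient}.
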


In fact, a semigroup satisfies $M_L$ (or $M_R$) if and only if both an ideal and the associated Rees quotient do.

\begin{thm}\label{thm:sgpext}
Let $S$ be a semigroup and $I$ an ideal of $S.$  Then $S$ satisfies $M_L$ if and only if both $I$ and $S/I$ satisfy $M_L$.
\end{thm}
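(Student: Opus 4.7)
The forward direction follows from two earlier results: since any ideal is a bi-ideal (as $IS^1I \subseteq II \subseteq I$), Theorem~\ref{thm:bi-ideal} yields that $I$ inherits $M_L$ from $S$, while Corollary~\ref{cor:sgpquotient}, applied to the Rees congruence associated with $I$, gives that $S/I$ also inherits $M_L$.

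For the reverse direction, the natural approach is to combine the hypothesis with Proposition~\ref{prop:sgpext}, which characterises $S$ satisfying $M_L$ via both ${_S}I_S$ and $S/I$ satisfying $M_L$. Since $S/I$ satisfies $M_L$ by hypothesis, the task reduces to establishing the key equivalence: the semigroup $I$ satisfies $M_L$ if and only if the biact ${_S}I_S$ satisfies $M_L$.

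The direction ``${_S}I_S$ satisfies $M_L$ $\Rightarrow$ $I$ satisfies $M_L$'' is the easier one. A descending chain $a_0 \geql a_1 \geql \cdots$ in the semigroup $I$ is also a chain in ${_S}I_S$ (since $I^1 \subseteq S^1$), so stabilises there by $M_L$. Combining a biact witness $a_m = ta_n$ with $t \in S^1$ and a semigroup witness $a_n = ja_m$ with $j \in I^1$ (the latter supplied by the chain in $I$) yields $a_m = (tjt)a_n$, and since $tjt \in SIS \subseteq I$, this upgrades the $\l$-equivalence in ${_S}I_S$ to $\l$-equivalence in $I$.

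The converse direction, that $I$ satisfying $M_L$ implies ${_S}I_S$ satisfying $M_L$, is the main obstacle. Given a chain $a_0 \geql a_1 \geql \cdots$ in ${_S}I_S$ with $a_{i+1} = s_{i+1}a_i$ for some $s_{i+1} \in S^1$, the plan is to construct from it an auxiliary chain in $I$'s $\l$-preorder and apply $M_L$ in $I$. A natural candidate is $b_i = a_0 a_i$, or a sandwich product such as $b_i = a_0 a_i a_0$, each lying in $I$ by the ideal property, with left-multipliers like $a_0 s_{i+1}$ that do lie in $I$ even when $s_{i+1}$ does not. The crux will be to ensure the auxiliary chain genuinely descends in $I$'s $\l$-preorder (rather than merely in $S$'s), and then to transfer its stabilisation back to the original chain; navigating this transfer cleanly is where the detailed work of the proof will lie.
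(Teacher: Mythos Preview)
Your forward direction is fine and matches the paper's (the paper cites Proposition~\ref{prop:sgpext} rather than Corollary~\ref{cor:sgpquotient} for $S/I$, but either works).

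The reverse direction, however, has a genuine gap. You reduce the problem to showing that the semigroup $I$ satisfying $M_L$ forces the biact ${_S}I_S$ to satisfy $M_L$, but this implication is \emph{false} in general. A concrete counterexample comes from Construction~\ref{con}: take any semigroup $S$ that fails $M_L$ (e.g.\ the bicyclic monoid), set $A={_S}S_S$, and form $U=\U(S,A)$ with null ideal $N=\{x_a:a\in A\}\cup\{0\}$. Since $N$ is a null semigroup it trivially satisfies $M_L$, yet ${_U}N_U$ inherits the $\l$-order of $A$ and so fails $M_L$. Your proposed auxiliary chains $b_i=a_0a_i$ or $b_i=a_0a_ia_0$ are exactly the kind of thing that breaks here: in a null ideal every such product is $0$, so the auxiliary chain is constant and carries no information back to the original chain. (Note that the direction you \emph{do} sketch correctly, ${_S}I_S\Rightarrow I$, is the one you do not need.)

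The missing idea is that the hypothesis on $S/I$ must be used in an essential way, not merely fed into Proposition~\ref{prop:sgpext} at the end. The paper argues directly with a putative infinite strictly descending $\l$-chain $a_0\gl a_1\gl\cdots$ in $S$, writing $a_i=s_ia_{i-1}$ and tracking the partial products $\s(i,j)=s_j\cdots s_i$. If all $\s(1,i)$ avoid $I$, the chain $\s(1,1)\geql\s(1,2)\geql\cdots$ lives in $S/I$ and $M_L$ there forces stabilisation of the $a_i$, a contradiction; otherwise some $\s(1,i_1)\in I$. Iterating this dichotomy produces indices $i_1<i_2<\cdots$ with each block-product $\s(i_k+1,i_{k+1})\in I$, whence $a_{i_1}\geql a_{i_2}\geql\cdots$ is a chain \emph{in $I$}, and $M_L$ for $I$ gives the final contradiction. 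The interleaving of the two hypotheses is precisely what your decoupled approach cannot achieve.
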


\begin{proof}
The forward implication follows from Theorem \ref{thm:bi-ideal} and Proposition \ref{prop:sgpext}.  For the converse, assume for a contradiction that there exists an infinite strictly descending chain $a_0\gl a_1\gl a_2\gl\cdots$ in $S.$  Then, for each $i\in\N,$ there exists $s_i\in S$ such that $a_i=s_ia_{i-1}$.  For each pair $i,j\in\N$ with $i\leq j,$ let $\s(i,j)=s_js_{j-1}\dots s_i$, interpreting $\s(i,i)$ as $s_i$.  Note that $a_j=\s(i,j)a_{i-1}$.  Now, clearly 
$$\s(1,1)\geql\s(1,2)\geql\s(1,3)\geql\cdots\quad\text{in }S.$$ 
Suppose that $\s(1,i)\in S{\sm}I$ for all $i\in\N.$  Then we have 
$$\s(1,1)\geql\s(1,2)\geql\s(1,3)\geql\cdots\quad\text{in }S/I.$$ 
Since $S/I$ satisfies $M_L$, there exists $m\in\N$ such that $\s(1,m)\,\l\,\s(1,n)$ (in $S/I$) for all $n\geq m,$ which implies that $\s(1,m)\,\l\,\s(1,n)$ in $S$ for all $n\geq m.$  But then $a_m=\s(1,m)a_0\,\l\,\s(1,n)a_0=a_n$ (in $S$) for all $n\geq m,$ a contradiction.  Thus, there exists $i_1\in\N$ such that $\s(1,i_1)\in I.$  Now consider the chain 
$$\s(i_1+1,i_1+1)\geql\s(i_1+1,i_1+2)\geql\s(i_1+1,i_1+2)\geql\cdots\quad\text{in }S.$$ 
By essentially the same argument as above, there exists $i_2\geq i_1+1$ such that $\s(i_1+1,i_2)\in I.$  Continuing in this way, we obtain an infinite chain $0=i_0<i_1<i_2<\cdots$ of non-negative integers such that $\s(i_k+1,i_{k+1})\in I$ for all $k\geq 0.$  Thus, for each $k\geq0$ we have $a_{i_{k+1}}=\s(i_k+1,i_{k+1})a_{i_k}\in Ia_{i_k}\subseteq I.$  It follows that $a_{i_1}\geql a_{i_2}\geql\cdots$ in $I.$  Since $I$ satisfies $M_L,$ there exists $N\in\N$ such that $a_{i_N}\,\l\,a_{i_n}$ in $I$ for all $n\geq N.$  But then $a_{i_N}\,\l\,a_{i_n}$ in $S$ for all $n\geq N,$ a contradiction.  This completes the proof.
\end{proof}

We conclude this section by showing that the analogue of Theorem \ref{thm:sgpext} for $M_J$ fails in both directions.  To this end, we introduce the following construction.

\begin{con}\label{con:S,T;A}
Let $S$ and $T$ be disjoint semigroups, and let $A$ be an $(S,T)$-biact.  Let $\{x_a : a\in A\}\sqcup\{0\}$ be a set in one-to-one correspondence with $A\sqcup\{0\}$ and disjoint from $S\cup T,$ and let $U=S\cup T\cup\{x_a : a\in A\}\cup\{0\}.$  Define a multiplication on $U,$ extending those of $S$ and $T,$ as follows:
$$sx_a=x_{sa},\quad x_at=x_{at},\quad st=ts=x_as=tx_a=x_ax_b=0u=u0=0$$
for all $s\in S,$ $t\in T,$ $a,b\in A$ and $u\in U.$  It is straightforward to show that $U$ is a semigroup under this multiplication, and we denote it by $\U(S,T;A).$ 
\end{con}

\begin{prop}\label{prop:S,T;A}
Let $S$ and $T$ be disjoint semigroups, let $A$ be an $(S,T)$-biact, and let $U=\U(S,T;A).$  Let $N=\{x_a : a\in A\}\cup\{0\},$ and let $I=S\cup N.$
\begin{enumerate}[leftmargin=*]
\item The set $I$ is an ideal of $U,$ and $N$ is an ideal of $I$ (and hence of $U$).  Moreover, $N$ is a null semigroup (i.e.\ $N^2=\{0\}$), and hence satisfies $M_J$.
\item The semigroup $U$ satisfies $M_J$ if and only if $S,$ $T$ and $A$ satisfy $M_J$.
\item The semigroup $I$ satisfies $M_J$ if and only if $S$ satisfies $M_J$ and $A$ satisfies $M_L$.
\end{enumerate}
\end{prop}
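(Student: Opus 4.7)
The proof rests on an explicit description of the $\j$-preorder on $U$ and on $I$, derived from the multiplication rules in Construction~\ref{con:S,T;A}. The non-zero products in $U$ are exactly those of the form $ss'\in S$, $tt'\in T$, $sx_a=x_{sa}$ and $x_at=x_{at}$; every other product (e.g.\ $st$, $tx_a$, $x_as$, $x_ax_b$, or anything involving $0$) equals $0$. In particular, $N^2=\{0\}$ and $N$ is an ideal of $U$, so once a descending $\j$-chain enters $N$ it cannot leave.

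Part (1) is then immediate: one checks directly that $UI\cup IU\subseteq I$ and $IN\cup NI\subseteq N$ from the multiplication rules. Because $N$ is null, each $x_a$ is alone in its $\j$-class in $N$, with $\{0\}$ as the unique minimum $\j$-class, so the poset of $\j$-classes of $N$ is an antichain with a minimum, which trivially satisfies $M_J$.

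For part (2), my plan is to compute $U^1uU^1$ by a case analysis on the two outer factors, yielding the identities
\[
U^1sU^1\cap S = S^1sS^1,\quad U^1tU^1\cap T = T^1tT^1,\quad U^1x_bU^1 = \{x_{sbt}:s\in S^1,\,t\in T^1\}\cup\{0\},
\]
together with $U^1sU^1\subseteq S\cup N$ and $U^1tU^1\subseteq T\cup N$. These translate directly into: $s\leqj s'$ in $U$ iff $s\leqj s'$ in $S$, similarly for $T$, and $x_a\leqj x_b$ in $U$ iff $a\leqj b$ in the biact $A$. The forward direction of (2) then follows by lifting strict descending $\j$-chains from $S$, $T$ and $A$ up to $U$. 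For the converse, given a descending chain $u_0\geqj u_1\geqj\cdots$ in $U$, the possible transitions are $S\to S\cup N$, $T\to T\cup N$, $N\sm\{0\}\to N$ and $\{0\}\to\{0\}$, so after at most two transitions the chain settles permanently in one of the strata $S$, $T$, $\{x_a:a\in A\}$ or $\{0\}$; applying $M_J$ in the appropriate component $S$, $T$ or $A$ then stabilises it.

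Part (3) follows the same template, but with the crucial observation that $I$ contains no elements of $T$. Consequently, the computation of $I^1x_bI^1$ drops the right-$T$ factor entirely, giving
\[
I^1x_bI^1 = \{x_{sb}:s\in S^1\}\cup\{0\}.
\]
Hence $x_a\leqj x_b$ in $I$ iff $a\leql b$ in the biact $A$, while $s\leqj s'$ in $I$ still reduces to $s\leqj s'$ in $S$ (since $I^1sI^1\cap S = S^1sS^1$). Running the same stratification argument inside $I$ therefore yields that $I$ satisfies $M_J$ if and only if $S$ satisfies $M_J$ and $A$ satisfies $M_L$. The main technical obstacle throughout is the careful case analysis needed to establish the identities for $U^1uU^1$ and $I^1x_bI^1$; once these are in hand, the $M_J$-stabilisation arguments are routine. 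The conceptual point distinguishing (3) from (2) is exactly the absence of a right $T$-action inside $I$, which replaces $\leqj$ on the $N$-stratum with $\leql$ on $A$.
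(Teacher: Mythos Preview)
Your proposal is correct, and the key computations you identify---that $x_a\leqj x_b$ in $U$ iff $a\leqj b$ in $A$, and that $x_a\leqj x_b$ in $I$ iff $a\leql b$ in $A$---are exactly the ones the paper uses. Where you diverge is in how you assemble these facts. The paper proceeds modularly: it invokes Proposition~\ref{prop:sgpext} (the ideal-extension criterion: $U$ satisfies $M_J$ iff the $U$-biact ${_U}N_U$ and the Rees quotient $U/N$ do), identifies $U/N$ with the $0$-direct union $S\cup T\cup\{0\}$, and then appeals to an external lemma of Hall and Munn to conclude that this $0$-direct union satisfies $M_J$ iff $S$ and $T$ do. Part~(3) is handled analogously via $I/N\cong S^0$. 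Your argument instead bypasses both Proposition~\ref{prop:sgpext} and the Hall--Munn reference by running a direct stratification-and-stabilisation argument on chains in $U$ (resp.\ $I$). This is more elementary and entirely self-contained, at the cost of re-deriving by hand what the paper's modular machinery gives for free; the paper's route, on the other hand, illustrates how Proposition~\ref{prop:sgpext} is meant to be used, and keeps the proof shorter once that infrastructure is in place.
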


\begin{proof}
(1) This follows immediately from the definition of the multiplication in $U.$

(2)  Clearly the Rees quotient $U/N$ is isomorphic to the subsemigroup $S\cup T\cup\{0\}$ of $U,$ and the latter semigroup is the {\em 0-direct union} of $S$ and $T$; i.e.\ $S\cap T=\emptyset,$ $0\notin S\cup T$ and $ST=TS=\{0\}.$  It follows from \cite[Lemma 1.4]{Hall} that $U/N$ satisfies $M_J$ if and only if $S$ and $T$ satisfy $M_J$.  Moreover, it is straightfoward to show that, for any $a,b\in A,$ we have $x_a\leqj x_b$ in $U$ if and only if $a\leqj b$ in $A.$  Consequently, the $U$-biact ${_U}N_U$ satisfies $M_J$ if and only if $A$ satisfies $M_J$.  The statement of (2) now follows from Proposition \ref{prop:sgpext}.

(3) Clearly $I/N\cong S^0$ satisfies $M_J$ if and only if $S$ satisfies $M_J$.  Moreover, it is straightfoward to show that, for any $a,b\in A,$ we have $x_a\leqj x_b$ in $I$ if and only if $a\leql b$ in $A,$ and hence ${_I}N_I$ satisfies $M_J$ if and only if $A$ satisfies $M_L$.  The statement of (3) now follows from Proposition \ref{prop:sgpext}. 
\end{proof}

There certainly exists a pair of disjoint semigroups $S$ and $T$ with an $(S,T)$-biact $A$ such that $S,$ $T$ and $A$ satisfy $M_J$ but $A$ does not satisfy $M_L$.  For instance, we can take $S$ to be the bicyclic monoid and $T=\{\ol{s} : s\in S\}$ a disjoint copy of $S,$ and define $A=\{a_s : s\in S\}$ with actions given by $sa_t=a_s\ol{t}=a_{st}$ for all $s,t\in S.$  Thus, by Proposition \ref{prop:S,T;A}, we have:

\begin{cor}\label{cor:ideal}
There exists a semigroup $U$ with an ideal $I,$ and an ideal $N$ of $I,$ such that $U,$ $N$ and $I/N$ satisfy $M_J$ but $I$ does not satisfy $M_J$.
\end{cor}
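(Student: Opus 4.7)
The plan is to assemble the example that the paper has essentially prepared for us. Let $S$ be the bicyclic monoid $\langle a,b\mid ab=1\rangle$, which by Remark (2) is bisimple (so satisfies $M_J$ trivially) but does not satisfy $M_L$. Let $T=\{\ol{s}:s\in S\}$ be a disjoint copy of $S,$ with multiplication $\ol{s}\,\ol{t}=\ol{st}.$  Define the $(S,T)$-biact $A=\{a_s:s\in S\}$ via $s\cdot a_t=a_{st}$ and $a_t\cdot\ol{s}=a_{ts}.$  A direct check of the three associativity axioms confirms that $A$ is indeed an $(S,T)$-biact. Finally, form $U=\U(S,T;A)$ from Construction \ref{con:S,T;A}, and set $N=\{x_a:a\in A\}\cup\{0\}$ and $I=S\cup N$ as in the statement of Proposition \ref{prop:S,T;A}.

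The first step of the verification is to identify the Green's preorders on $A.$  Since any element of $S^1AT^1$ has the form $u\cdot a_t\cdot\ol{v}=a_{utv}$ (with $u,v\in S^1$, interpreting $u=1$ or $v=1$ as omitting the action), one obtains $a_s\leqj a_t$ in $A$ iff $s\leqj t$ in $S,$ and similarly $a_s\leql a_t$ in $A$ iff $s\leql t$ in $S.$  Consequently the $\j$- and $\l$-posets of $A$ are isomorphic to those of $S$: so $A$ has a single $\j$-class (hence satisfies $M_J$), but $A$ inherits an infinite strictly descending chain of $\l$-classes from $S$ and therefore fails $M_L$.

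Now we invoke Proposition \ref{prop:S,T;A}. Part (1) gives that $I$ is an ideal of $U,$ that $N$ is an ideal of $I,$ and that $N$ is null and hence satisfies $M_J$.  Part (2) gives that $U$ satisfies $M_J$, because $S,$ $T$ (a copy of $S$), and $A$ all satisfy $M_J$. For the Rees quotient $I/N,$ observe that its underlying set is $S\sqcup\{0\}$ with the original multiplication of $S$ (and all other products equal to $0$), so $I/N\cong S^0$; since adjoining a zero simply appends a bottom element to the poset of $\j$-classes, $I/N$ satisfies $M_J.$  Finally, part (3) of Proposition \ref{prop:S,T;A} tells us that $I$ satisfies $M_J$ if and only if $S$ satisfies $M_J$ and $A$ satisfies $M_L$; the latter fails by the previous paragraph, so $I$ does not satisfy $M_J$.

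The only mildly technical step is the identification of the Green's preorders on $A$ with those on $S$ — but this amounts to the observation already flagged in the proof of Proposition \ref{prop:S,T;A} that the transported structure on the ``copy'' elements $x_a$ (or here $a_s$) behaves like the original semigroup. Everything else is quotation of preceding results.
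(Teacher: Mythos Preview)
Your proposal is correct and follows essentially the same approach as the paper: you build the example from the bicyclic monoid $S$, its disjoint copy $T$, and the $(S,T)$-biact $A=\{a_s:s\in S\}$ described just before the corollary, then invoke Proposition~\ref{prop:S,T;A} to read off the required properties of $U$, $I$, $N$ and $I/N$. The only difference is that you spell out a few verifications (the identification of Green's preorders on $A$ with those on $S$, and the isomorphism $I/N\cong S^0$) that the paper leaves implicit or buries in the proof of Proposition~\ref{prop:S,T;A}.
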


\section{Stability}\label{sec:stability}

Unlike the conditions $M_L$, $M_R$ and $M_J$, stability for semigroups/biacts is not closed under quotients.  Indeed, free semigroups are certainly stable since they are $\j$-{\em trivial} (i.e.\ $a\,\j\,b\Leftrightarrow a=b$), and of course every semigroup is a quotient of a free semigroup.  To see that stability is not closed under biact quotients, consider any free semigroup $S$ with a congruence $\rho$ such that $S/{\rho}$ is not stable.  Then the $S$-biact ${_S}S_{S}$ is stable (since $S$ is stable), and it follows from the first part of Lemma \ref{lem:quotient} that ${_S}S_S/\rho$ is not stable.


On the other hand, we have the following analogue of Proposition \ref{prop:actex}.

\begin{prop}\label{prop:actex,stab}
Let $S$ and $T$ be semigroups, let $A$ be an $(S,T)$-biact, and let $B$ a subact of $A.$  Then $A$ is (left) stable if and only if both $B$ and $A/B$ are (left) stable.
\end{prop}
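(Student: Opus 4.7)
The plan is to prove the statement for left stability; the ``stable'' case then follows by combining this with the dual right-stable version. A recurring principle is that, because $B$ is a subact, $S^1BT^1\subseteq B,$ so once any intermediate product of a computation lands in $B$ it cannot escape; dually, in the Rees quotient $A/B$ a chain of actions applied to a nonzero element yields a prescribed nonzero result only if every intermediate product already lies in $A\sm B.$ This observation lets $\l$- and $\j$-relation witnesses migrate cleanly between $A$ and $A/B.$

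For the forward direction, I would first note that for $b,b'\in B$ the condition $b\leql b'$ (resp.\ $b\,\j\,b'$) in $B$ is equivalent to the same condition in $A,$ since $S^1b'T^1\subseteq B.$ Hence left stability of $A$ restricts immediately to left stability of $B.$ For $A/B,$ take $[a]\in A/B$ and $s\in S$ with $s[a]\,\j\,[a].$ The case $[a]=0$ is trivial. Otherwise $a\in A\sm B,$ and the principle above forces $sa\in A\sm B$ and allows the witnesses for $[a]\leqj s[a]$ in $A/B$ to lift to witnesses for $a\leqj sa$ in $A.$ Thus $sa\,\j\,a$ in $A,$ so by hypothesis $sa\,\l\,a$ in $A,$ and pushing the witnesses back down through the quotient yields $s[a]\,\l\,[a]$ in $A/B.$

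For the reverse direction, suppose $B$ and $A/B$ are left stable and take $a\in A,$ $s\in S$ with $sa\,\j\,a$ in $A.$ If $a\in B,$ then $sa\in B$ and the witnesses for $\j$ in $A$ automatically take values in $B,$ so $sa\,\j\,a$ in $B$; left stability of $B$ gives $sa\,\l\,a$ in $B,$ and hence in $A.$ If $a\in A\sm B,$ write $a=u(sa)p$ with $u\in S^1,$ $p\in T^1$; the subact property forces $sa$ and $u(sa)$ to lie in $A\sm B,$ so passing to $A/B$ gives $s[a]\,\j\,[a].$ Left stability of $A/B$ produces $u'\in S^1$ with $u'\cdot[sa]=[a]$ in $A/B,$ and since the result is nonzero this means $u'(sa)=a$ in $A.$ Hence $sa\,\l\,a$ in $A,$ as required.

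The main obstacle is the bookkeeping around the zero element of $A/B$: a priori, a $\j$- or $\l$-witness in $A/B$ might involve intermediate products that collapse to $0,$ and conversely an identity in $A$ might get spoiled when pushed through the quotient. Both issues are handled by the single observation that $B$ is absorbing, so the zero collapses simply cannot occur along a path whose endpoints lie in $A\sm B.$ Once this is in hand, the proof reduces to routine transfers of the defining witnesses.
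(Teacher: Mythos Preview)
Your proof is correct and follows essentially the same approach as the paper. The paper compresses the argument into the single observation that for $a,b\in A$ with $a\,\j\,b$ in $A$ (so that $a,b$ lie together in $B$ or together in $A\sm B$), one has $a\leql b$ in $A$ if and only if $a\leql b$ in $B$ or $a\leql b$ in $A/B$; your write-up is simply an explicit unpacking of this, carrying out the case split and verifying the transfer of witnesses in each direction.
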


\begin{proof}
For $\k\in\{\l,\r\},$ it is easy to see that for any $a,b\in A$ with $a\,\j\,b$ in $A,$ we have $a\leqk b$ in $A$ if and only if $a\leqk b$ in $B$ or $a\leqk b$ in $A/B.$  The result quickly follows.
\end{proof}

From now on we focus on semigroups.  Recall that the Green index of a subsemigroup $T$ in a semigroup $S$ is the number of $\h$-classes of the Rees quotient ${_T}(S/T)_T$ of the $T$-biact ${_T}S_T$ by its subact ${_T}T_T$.  As ${_T}T_T$ is a subact of ${_T}S_T$, by Proposition \ref{prop:actex,stab} we have:

\begin{prop}\label{prop:subsemigroup,stab}
Let $T$ be a subsemigroup of a semigroup $S.$  Then ${_T}S_T$ is (left) stable if and only $T$ and ${_T}(S/T)_T$ are (left) stable.
\end{prop}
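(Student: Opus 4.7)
The plan is to derive this statement as an immediate corollary of Proposition \ref{prop:actex,stab}, which has already done the substantive work at the level of biacts.

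First I would observe that ${_T}T_T$ is a subact of ${_T}S_T$: indeed, $T$ is closed under left and right multiplication by elements of $T^1$, so $T^1 \cdot T \cdot T^1 \subseteq T$. Then by the very definition of ${_T}(S/T)_T$ given just before Proposition \ref{prop:subsemigroup}, the Rees quotient of ${_T}S_T$ by its subact ${_T}T_T$ is precisely ${_T}(S/T)_T$.

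Next I would apply Proposition \ref{prop:actex,stab} with $A = {_T}S_T$ and $B = {_T}T_T$ to conclude that ${_T}S_T$ is (left) stable if and only if both ${_T}T_T$ and ${_T}(S/T)_T$ are (left) stable. Finally, I would invoke the definition of (left) stability for semigroups, namely that a semigroup $T$ is (left) stable precisely when the $T$-biact ${_T}T_T$ is (left) stable, to replace the condition on ${_T}T_T$ with the condition on $T$ itself.

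There is no substantive obstacle here: the statement is essentially a dictionary translation of Proposition \ref{prop:actex,stab} into the language of Green-index-style relative structures. The only point worth mentioning is the identification of the Rees quotient, which is fixed by the notational convention introduced in the paragraph preceding Proposition \ref{prop:subsemigroup}.
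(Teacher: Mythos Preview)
Your proposal is correct and matches the paper's own argument exactly: the paper simply notes that ${_T}T_T$ is a subact of ${_T}S_T$ and invokes Proposition \ref{prop:actex,stab}. You have merely spelled out the identification of the Rees quotient and the translation between stability of $T$ and of ${_T}T_T$, which are routine.
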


We shall establish an analogue of Corollary \ref{cor:Greenindex} for stability.  First, we have:

\begin{prop}\label{prop:subsemigroup,L-classes}
Let $S$ be a semigroup, and let $T$ be a subsemigroup of $S$ such that ${_T}(S/T)_T$ has finitely many $\l$-classes.
\begin{enumerate}[leftmargin=*]
\item[\emph{(1)}] If $S$ is left stable then $T$ is left stable.
\item[\emph{(2)}] The semigroup $T$ is left stable if and only if ${_T}S_T$ is left stable.
\end{enumerate}
\end{prop}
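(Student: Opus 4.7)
My plan for (2) is to invoke Proposition~\ref{prop:subsemigroup,stab}, which asserts that ${_T}S_T$ is left stable if and only if both $T$ and ${_T}(S/T)_T$ are. It then suffices to observe that ${_T}(S/T)_T$ is automatically left stable under the standing hypothesis: having only finitely many $\l$-classes in total, every $\j$-class of ${_T}(S/T)_T$ contains just finitely many $\l$-classes and hence a minimal one, so condition~(8) of Proposition~\ref{prop:lstable} applies.

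For (1), the plan is to mirror the $(1)\Rightarrow(2)$ direction of Theorem~\ref{thm:subsemigroup}, using left stability of $S$ in place of $M_L$. Assume for a contradiction that $T$ is not left stable. By condition~(8) of Proposition~\ref{prop:lstable}, some $T$-$\j$-class $J$ contains no minimal $T$-$\l$-class, so I can extract an infinite strictly descending chain $a_0\gl a_1\gl a_2\gl\cdots$ lying entirely in $J$. All $a_i$ are then pairwise $\j$-equivalent in $T$, hence in $S$, while $a_j\leql a_i$ in $S$ for every $j\geq i$; left stability of $S$ therefore forces $a_i\,\l\,a_j$ in $S$ for all $i,j$.

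Let $N$ denote the number of $\l$-classes of ${_T}(S/T)_T$, and for each $i\in\{0,1,\ldots,N-1\}$ choose $s_i\in S^1$ with $a_i=s_ia_N$. If any $s_i$ lay in $T^1$, then $a_i\leql a_N$ in $T$ combined with $a_N\leql a_i$ in $T$ from the chain would give $a_i\,\l\,a_N$ in $T$, contradicting strictness; hence each $s_i\in S\setminus T$. As one of the $N$ $\l$-classes of ${_T}(S/T)_T$ is the zero class, ${_T}S_T$ has only $N-1$ $\l$-classes lying in $S\setminus T$, and the pigeonhole principle supplies indices $i<j$ with $s_i\,\l\,s_j$ in ${_T}S_T$. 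Since $\l$ is a right congruence on ${_T}S_T$, right-multiplying by $a_N$ produces $a_i\,\l\,a_j$ in ${_T}S_T$, and because $a_i,a_j\in T$ this is the same as $a_i\,\l\,a_j$ in $T$, contradicting $a_j\ll a_i$. The only point requiring care is the opening step of localising the descending chain inside a single $T$-$\j$-class, which is exactly what formulation~(8) of left stability makes available; with that secured, the pigeonhole finish runs in perfect parallel to the proof of Theorem~\ref{thm:subsemigroup}.
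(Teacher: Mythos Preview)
Your proof is correct. Part~(2) matches the paper's argument essentially verbatim: the paper observes that ${_T}(S/T)_T$, having finitely many $\l$-classes, satisfies $M_L$ and hence is left stable by Lemma~\ref{lem:L-periodic}, then invokes Proposition~\ref{prop:subsemigroup,stab}; your route via condition~(8) of Proposition~\ref{prop:lstable} is an equally short variant of the same idea.

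For part~(1) the two proofs diverge in structure, though both finish with the same pigeonhole step. The paper argues directly: given $a\leql b$ and $a\,\j\,b$ in $T$, it writes $a=tb$, $b=uav$, forms the iterates $(tu)^i a$, uses left stability of $S$ once to obtain a single multiplier $s\in S^1$ with $a=s(tu)^n a$, and then pigeonholes the $n$ elements $s_i=u(tu)^{i-1}s$ among the $n-1$ $\l$-classes of ${_T}S_T$ in $S{\sm}T$ to produce an explicit $x\in T^1$ with $b\in T^1a$. Your argument instead works by contradiction through condition~(8): you extract an infinite strictly descending $\l$-chain inside a single $T$-$\j$-class, note that left stability of $S$ collapses it to a single $\l$-class in $S$, and then run the pigeonhole argument of Theorem~\ref{thm:subsemigroup}~$(1){\Rightarrow}(2)$ on the resulting multipliers. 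Your approach has the virtue of making the parallel with Theorem~\ref{thm:subsemigroup} completely transparent and avoids the algebraic bookkeeping with the expressions $s_i(tu)^{n-i}a$; the paper's direct approach, on the other hand, never needs to manufacture an infinite chain and yields an explicit witness $x\in T^1$ for $b\leql a$ from the given data.
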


\begin{proof}
Clearly ${_T}(S/T)_T$ satisfies $M_L$, and is hence left stable by Lemma \ref{lem:L-periodic}, so (2) follows from Proposition \ref{prop:subsemigroup,stab}.

For (1), let $a,b\in T$ with $a\leql b$ and $a\,\j\,b$ in $T.$  Then there exist $t,u,v\in T^1$ such that $a=tb$ and $b=uav.$  It follows that $a=(tu)^iav^i$ and $b=u(tu)^{i-1}av^i$ for all $i\in\N.$  Let $n$ be the number of $\l$-classes of ${_T}(S/T)_T$.  Clearly $(tu)^na\leql a\,\j\,(tu)^na$ in $T$ (and hence in $S$).  Since $S$ is left stable, it follows that $(tu)^na\,\l\,a,$ so there exists $s\in S^1$ such that $a=s(tu)^na.$  For $i\in\{1,\dots,n\},$ let $s_i=u(tu)^{i-1}s.$  We then have
$$b=u(tu)^{i-1}av^i=u(tu)^{i-1}s(tu)^nav^i=s_i(tu)^nav^i=s_i(tu)^{n-i}(tu)^iav^i=s_i(tu)^{n-i}a.$$
Thus, if $s_i\in T^1$ for some $i\in\{1,\dots,n\},$ then $b\in T^1a$ and hence $a\,\l\,b$ in $T,$ and we are done.  Suppose then that $s_i\in S{\sm}T$ for all $i\in\{1,\dots,n\}.$  Since there are only $n-1$ $\l$-classes of ${_T}S_T$ in $S{\sm}T,$ by the pigeonhole principle there exist $i,j\in\{1,\dots,n\}$ with $i<j$ such that $s_i\,\l\,s_j$.  Thus, there exists $x\in T^1$ such that $s_i=xs_j$.  It follows that
\begin{align*}
b&=s_i(tu)^nav^i=xs_j(tu)^nav^i=xu(tu)^{j-1}s(tu)^nav^i=xu(tu)^{j-1}av^i=xu(tu)^{j-1-i}(tu)^iav^i\\
&=xu(tu)^{j-1-i}a\in T^1a.
\end{align*}
Thus $a\,\l\,b$ in $T,$ as required.
\end{proof}

\begin{thm}\label{thm:Greenindex,stab}
Let $S$ be a semigroup, and let $T$ be a subsemigroup of $S$ of finite Green index.  Then the following are equivalent:
\begin{enumerate}[leftmargin=*]
\item[\emph{(1)}] $S$ is (left) stable;
\item[\emph{(2)}] $T$ is (left) stable;
\item[\emph{(3)}] ${_T}S_T$ is (left) stable.
\end{enumerate}
\end{thm}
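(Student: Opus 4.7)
The plan is to deduce $(1)\Leftrightarrow(2)\Leftrightarrow(3)$ from Propositions~\ref{prop:subsemigroup,L-classes} and~\ref{prop:subsemigroup,stab} (together with their duals) plus one remaining implication. Finite Green index means ${_T}(S/T)_T$ has finitely many $\l$- and $\r$-classes, so Proposition~\ref{prop:subsemigroup,L-classes} and its right-sided dual already give $(1)\Rightarrow(2)$ and $(2)\Leftrightarrow(3)$ for both left and right stability. What is left to prove is $(2)\Rightarrow(1)$ for left stability; the right version will be dual, and the ``stable'' version will then follow by combining the two one-sided conclusions.

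To handle the left-stable case of $(2)\Rightarrow(1)$, I would take $a\in S$ and $s\in S$ with $sa\,\j\,a$ in $S$ and aim for $sa\,\l\,a$. Writing $a=u(sa)v=usav$ with $u,v\in S^1$ and setting $p=us$, iteration yields $a=p^nav^n$ for every $n\geq 0$. The degenerate cases $p=1$ (forcing $sa=a$) and $v=1$ (giving $a=usa\in S^1\cdot sa$) can be dismissed immediately, so assume $p,v\in S$. The key reduction is that if $p^i\,\l\,p^j$ in $S$ for some $1\leq i<j$, then writing $p^i=xp^j$ with $x\in S^1$ and substituting into $a=p^iav^i$ gives
\[
a=xp^{j-i}(p^iav^i)=xp^{j-i}a=x(us)^{j-i-1}u\cdot sa\in S^1\cdot sa,
\]
so $sa\,\l\,a$. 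The task thus reduces to finding such a pair $i<j$.

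I would split according to whether the powers of $p$ eventually lie in $T$. If $p^n\in S\sm T$ for infinitely many $n$, pigeonhole on the finitely many $\l$-classes of ${_T}S_T$ in $S\sm T$ immediately produces $i<j$ with $p^i\,\l\,p^j$ in ${_T}S_T$, hence in $S$. Otherwise $I:=\{n\geq 1:p^n\in T\}$ is cofinite in $\N_{\geq 1}$. Setting $J:=\{n\geq 1:v^n\in T\}$, both $I$ and $J$ are subsemigroups of $(\N_{\geq 1},+)$. When $J\neq\emptyset$, cofiniteness of $I$ forces $I\cap J\neq\emptyset$; any $n\in I\cap J$ has $p^n,v^n\in T^1$, so the equation $a=p^nav^n$ places $p^na\leql a$ and $p^na\,\j\,a$ entirely inside the biact ${_T}S_T$. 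Left stability of ${_T}S_T$ (equivalent to $T$ being left stable by Proposition~\ref{prop:subsemigroup,L-classes}(2)) then forces $p^na\,\l\,a$ in ${_T}S_T$ and hence in $S$, giving $a=y\cdot p^na=(yp^{n-1}u)\cdot sa$ for some $y\in S^1$.

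The main obstacle is the final sub-case $J=\emptyset$, in which no power of $v$ lies in $T$. Here the finitely many $\r$-classes of ${_T}S_T$ in $S\sm T$ come into play: pigeonhole supplies $i<j$ with $v^j=v^it$ for some $t\in T^1$, and substituting into $a=p^jav^j$ while using $p^iav^i=a$ collapses the right factor:
\[
a=p^jav^j=p^jav^it=p^{j-i}(p^iav^i)t=p^{j-i}a\cdot t.
\]
Iterating gives $a=p^{k(j-i)}at^k$ for every $k\geq 1$; for $k$ large enough, cofiniteness of $I$ guarantees $k(j-i)\in I$, and then $p^{k(j-i)}\in T$ together with $t^k\in T^1$ reduces the problem to the preceding sub-case (with $t^k$ playing the role of $v^n$). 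The right-stable case is obtained by dualising the argument, and the stable version follows by combining the two sides.
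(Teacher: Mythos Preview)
Your argument is correct and follows essentially the same route as the paper: reduce via Proposition~\ref{prop:subsemigroup,L-classes} to proving $(2)\Rightarrow(1)$ for left stability, iterate $a=(us)^nav^n$, and case-split according to whether the powers of $us$ eventually lie in $T$, applying pigeonhole on the finitely many $\l$- and $\r$-classes in $S\sm T$ in the remaining cases. The only differences are bookkeeping: you invoke left stability of ${_T}S_T$ rather than of $T$ itself (which spares you from also tracking whether $(us)^na\in T$), and in the final sub-case you reach a sufficiently large exponent by iterating $a=p^{k(j-i)}at^k$, whereas the paper instead arranges $j-i\geq m$ directly via an infinite pigeonhole.
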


\begin{proof}
We prove the result for left stability.  Then, by symmetry, a dual statement holds for right stability, and the result for (two-sided) stability follows.   

Given Proposition \ref{prop:subsemigroup,L-classes}, we only need to prove that (2) implies (1).  So, let $a,b\in S$ with $a\leql b$ and $a\,\j\,b$ in $S.$  Then there exist $s,u,v\in S^1$ such that $a=sb$ and $b=uav.$  Then $a=(su)^iav^i$ and $b=u(su)^iav^{i+1}$ for all $i\in\N.$ 

Suppose first that there are infinitely many $n\in\N$ such that $(su)^n\in S{\sm}T$ or $(su)^na\in S{\sm}T.$  Since $T$ has finite Green index in $S,$ and since $\h\subseteq\l$ and $\l$ is right compatible, it follows that there exist $i,j\in\N$ with $i<j$ such that $(su)^ia\,\l\,(su)^ja$ in ${_T}(S/T)_T$.  Thus, there exists $x\in T^1$ such that $(su)^ia=x(su)^ja.$  It follows that
$$b=u(su)^iav^{i+1}=ux(su)^jav^{i+1}=ux(su)^{j-i-1}(su)^{i+1}av^{i+1}=ux(su)^{j-i-1}a,$$
and hence $a\,\l\,b$ in $S.$

Now suppose that there are only finitely many $n\in\N$ such that $(su)^n\in S{\sm}T$ or $(su)^na\in S{\sm}T.$  Then there exists $m\in\N$ such that $(su)^n,(su)^na\in T$ for all $n\geq m.$  Thus, for each $n\geq m,$ we have $(su)^{m+n}a=(su)^n(su)^ma\in T^1(su)^ma.$  We claim that there exists $N\geq m$ such that $(su)^{m+N}a\,\j\,(su)^ma$ in $T.$  Indeed, for each $n\geq m$ we have 
$$(su)^ma=(su)^m(su)^{m+n}av^{m+n}\in T^1(su)^{m+n}av^{m+n}.$$  
Thus, if $v^{m+n}\in T^1$ for some $n\geq m,$ the claim holds with $N=n.$  Suppose then that $v^{m+n}\in S{\sm}T$ for all $n\geq m.$  Since $T$ has finite Green index in $S,$ it follows that there exist $i,j\geq m$ with $j-i\geq m$ such that $v^i\,\h\,v^j$.  Thus, there exists $y\in T^1$ such that $v^j=v^iy.$  It follows that
$$(su)^ma=(su)^{m+j}av^iy
=(su)^{m+j-i}(su)^iav^iy=(su)^{m+j-i}ay\in(su)^{m+j-i}aT^1,$$
and hence the claim holds with $N=j-i.$  This establishes the claim.  Now, since $T$ is left stable, there exists $t\in T^1$ such that $(su)^ma=t(su)^{m+N}a.$  We then have
$$b=u(su)^mav^{m+1}=ut(su)^{m+N}av^{m+1}
=ut(su)^{N-1}(su)^{m+1}av^{m+1}=ut(su)^{N-1}a\in S^1a.$$
We conclude that $a\,\l\,b$ in $S,$ as required.
\end{proof}


The next result is an analogue of Lemma \ref{lem:preserving} for stability.  As with that lemma, the proof is omitted.

\begin{lem}
Let $T$ be a subsemigroup of a semigroup $S.$  If $S$ is left stable and $T$ is $\l$-preserving in $S,$ then $T$ is left stable.  Moreover, if $S$ is stable and $T$ is both $\l$- and $\r$-preserving in $S,$ then $T$ is stable.
\end{lem}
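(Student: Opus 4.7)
The plan is to reduce both assertions to a direct unpacking of the definitions, using only the fact that the Green preorders (not merely the equivalences) are preserved when passing from $S$ down to $T.$

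For the first assertion, suppose $a,b\in T$ with $a\leql b$ in $T$ and $a\,\j\,b$ in $T.$  I will use Proposition \ref{prop:lstable}(3) as the working definition of left stability: it suffices to prove $a\,\l\,b$ in $T.$  The idea proceeds in four short steps.  First, since $T$ is $\l$-preserving, $a\leql b$ in $T$ transfers to $a\leql b$ in $S.$  Second, the witnesses to $a\,\j\,b$ in $T$ lie in $T^1,$ and every element of $T^1$ may be viewed as an element of $S^1$ (the adjoined identity acts as the identity on $S$), so $a\,\j\,b$ also holds in $S$; no preservation hypothesis is needed here since the $\j$-preorder only grows when one enlarges the multiplier set.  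Third, left stability of $S$ (again via Proposition \ref{prop:lstable}) yields $a\,\l\,b$ in $S,$ equivalently $b\leql a$ in $S.$  Fourth, applying the $\l$-preserving hypothesis in the reverse direction gives $b\leql a$ in $T,$ which combined with $a\leql b$ in $T$ gives $a\,\l\,b$ in $T.$

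For the second assertion, the identical argument with $\r$ in place of $\l$ shows that if $S$ is right stable and $T$ is $\r$-preserving, then $T$ is right stable.  Since stability is just the conjunction of left and right stability, the hypotheses of the ``moreover'' statement deliver both halves, hence $T$ is stable.

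I do not expect any real obstacle: the statement is essentially the observation that once both $\leql$ and $\geql$ transfer faithfully between $T$ and $S,$ and the ambient $\j$-relation in $T$ is a restriction of the one in $S,$ the implication defining left stability is inherited automatically.  The only point that requires even a moment's thought is the observation that passing $\j$-witnesses from $T^1$ to $S^1$ is unconditional, which is why no $\j$-preserving assumption appears in the hypothesis.
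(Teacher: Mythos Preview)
Your argument is correct and is precisely the straightforward unpacking the paper has in mind; indeed, the paper omits the proof entirely, remarking that it is analogous to the (also omitted) proof of Lemma~\ref{lem:preserving}. One very minor comment: step~1 does not actually need the $\l$-preserving hypothesis (witnesses in $T^1$ lie in $S^1$ automatically), so the hypothesis is used only in step~4, exactly as you note at the end.
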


We have already noted that regular subsemigroups and subsemigroups whose complements are ideals are $\l$- and $\r$-preserving.  Recall that a {\em retract} of a semigroup $S$ is a subsemigroup $T$ of $S$ such that there is a homomorphism $\th : S\to T$ with $t\th=t$ for all $t\in T.$  It is easy to show that retracts are $\l$- and $\r$-preserving.  Thus, we have:

\begin{cor}
Let $T$ be a subsemigroup of a semigroup $S$ such that (at least) one of the following holds: $T$ is regular; $T$ is a retract of $S$; $S{\sm}T$ is an ideal of $S.$  If $S$ is (left) stable, then $T$ is (left) stable.
\end{cor}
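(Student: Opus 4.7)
The plan is to reduce this corollary directly to the preceding lemma: it suffices to show that, under each of the three hypotheses, $T$ is $\l$-preserving in $S,$ and for the two-sided conclusion also $\r$-preserving.  The paper has already observed that regular subsemigroups and subsemigroups whose complements are two-sided ideals have both preservation properties, so the only genuinely new case is that of a retract.

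For completeness I would briefly verify each case.  If $T$ is regular and $a,b\in T$ satisfy $a\leql b$ in $S,$ I would write $a=sb$ for some $s\in S^1$ and, using $b=byb$ for some $y\in T,$ compute $a=sb=sbyb=(ay)b\in T^1b,$ with the $\r$-argument being dual.  If $S\sm T$ is an ideal of $S$ and $a=sb$ with $a,b\in T$ and $s\in S^1,$ then necessarily $s\in T^1,$ for otherwise $sb\in(S\sm T)S\subseteq S\sm T,$ contradicting $sb=a\in T;$ again the $\r$-argument is dual.  Finally, if $T$ is a retract of $S$ via $\th:S\to T,$ I would extend $\th$ to a map $S^1\to T^1$ by sending $1\mt 1,$ and, given $a=sb$ with $a,b\in T$ and $s\in S^1,$ apply $\th$ to obtain $a=a\th=(s\th)(b\th)=(s\th)b,$ with $s\th\in T^1;$ the $\r$-argument is again dual.

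Having verified $\l$- and $\r$-preservation in all three cases, the left stable conclusion follows from the first part of the preceding lemma and the stable conclusion from its second part.  The main ``obstacle'' here is essentially bookkeeping: the real content lives inside the lemma, and each of the three verifications is a one-line computation.  The only subtlety worth care is ensuring that the extension $1\mt 1$ of the retraction is declared in advance so that the boundary case $s=1$ poses no issue in the retract argument.
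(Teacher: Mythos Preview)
Your proposal is correct and follows exactly the paper's approach: the paper states just before the corollary that regular subsemigroups, retracts, and subsemigroups with ideal complement are all $\l$- and $\r$-preserving, and then deduces the corollary immediately from the preceding lemma.  Your only addition is to spell out the three one-line verifications (which the paper leaves as ``easy to show'' or refers back to earlier remarks), and these are all correct, including your handling of the $s=1$ boundary case via the extension $1\mapsto 1$ of the retraction.
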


Stability is also inherited by bi-ideals:

\begin{thm}
Let $B$ be a bi-ideal of a semigroup $S.$  If $S$ is (left) stable, then $B$ is (left) stable.
\end{thm}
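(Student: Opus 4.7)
The plan is to use the characterization of left stability from Proposition \ref{prop:lstable}, reducing the goal to showing: for all $a, b \in B$ with $a \leql b$ in $B$ and $a \,\j\, b$ in $B$, one has $a \,\l\, b$ in $B$. Fix such $a, b$. The hypotheses give $t \in B^1$ with $a = tb$, and $u, v \in B^1$ with $b = uav$. Since both relations also hold in $S$, left stability of $S$ yields $a \,\l\, b$ in $S$, providing $s \in S^1$ with $b = sa$. The task reduces to exhibiting a $B^1$-witness for $b \in B^1 a$.

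If $v = 1$, then $b = ua \in B^1 a$ and we are done. Otherwise, I would first observe that $a \,\j\, av$ in $B$: the inequality $av \leqj a$ is trivial, and $a = tb = tu \cdot av$ shows $a \leql av$ (hence also $a \leqj av$) in $B$, with witness $tu \in B^1$. Applying left stability of $S$ to the pair $(a, av)$ then yields $av = s_1 a$ for some $s_1 \in S^1$, and substituting back gives $b = uav = (us_1) a$.

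The main obstacle is to replace the factor $us_1 \in B^1 S^1$ by an element of $B^1$; this is where the bi-ideal property $BS^1 B \subseteq B$ must enter, in the same spirit as in the proof of Theorem \ref{thm:bi-ideal}. Two useful intermediate facts are that $us_1 t \in BS^1 B \subseteq B$ satisfies $(us_1 t)b = us_1(tb) = us_1 a = b$, exhibiting a $B$-element that fixes $b$ under left multiplication, and that $(st)b = b$ together with $(ts)a = a$ follow directly from $b = sa$ and $a = tb$. By iteratively combining such $B$-elements, flanking the $S^1$-factors by $B$-elements, and invoking the bi-ideal closure, I expect to construct $c \in B^1$ with $ca = b$. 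The hard part is that the natural manipulations tend to produce elements of the form $(\text{stuff in }B) \cdot b$ rather than $(\text{stuff in }B) \cdot a$, so a careful rearrangement that exploits all the available witnesses $t, u, v, s, s_1$ simultaneously is required.

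Once the left-stability case is established, the dual argument (with $\r$ replacing $\l$) handles inheritance of right stability by bi-ideals of right-stable semigroups, and combining the two gives inheritance of full stability.
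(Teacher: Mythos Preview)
Your setup is correct and the reduction in the final paragraph is fine, but the proposal stops short of a proof: you yourself flag that the ``careful rearrangement'' producing $c\in B^1$ with $ca=b$ is still to be found, and in fact the route you sketch does not close.  From $b = us_1a$ with $u\in B^1$ and $s_1\in S^1$ you need $s_1$ flanked on both sides by $B$-elements in order to invoke $BS^1B\subseteq B$, but every natural iteration---substituting $a=tb$ gives $b=(us_1t)b$, and then substituting $b=uav$ gives $b=(us_1tu)s_1a$---reproduces the shape $(\text{element of }B)\cdot s_1\cdot a$ without ever trapping the $S^1$-witness on its right by a $B$-element \emph{before} reaching $a$.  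The edge cases $t=1$ and $u=1$ are also untreated; the first is trivial ($a=b$), but the second leaves you with only $b=s_1a$ and $s_1\in S^1$.

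The paper's proof supplies exactly the idea you are missing.  Rather than applying left stability of $S$ to the pair $(a,av)$, first form $z=utu$ (assuming $t\neq 1$; note $z\in B$ whether or not $u=1$) and observe
\[
b=uav=utbv=utuav^2=zav^2.
\]
One checks that $za\leql av^2$ and $za\,\j\,av^2$ in $B$, and then applies left stability of $S$ to \emph{this} pair, obtaining $av^2=s'(za)$ for some $s'\in S^1$.  Now the witness $s'$ arrives already sandwiched: $b=z(av^2)=zs'za=(zs'z)a$ with $zs'z\in BS^1B\subseteq B$.  The key is to choose the pair fed to stability so that its left-hand member already carries a $B$-factor adjacent to $a$; the $S^1$-witness then automatically lands between two copies of that $B$-factor in the expression for $b$.
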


\begin{proof}
It suffices to prove that if $S$ is left stable then so is $B.$  So, let $a,b\in B$ with $a\leql b$ and $a\,\j\,b$ in $B.$  Then there exist $u,x,y\in B^1$ such that $a=ub$ and $b=xay.$  If $u=1$ then $a=b,$ and we are done.  Suppose now that $u\neq 1,$ and let $z=xux.$  Then $z\in B.$  We have
$$b=xay=xuby=xuxay^2=zay^2.$$
It follows that 
$$za=zub=zuzay^2\quad\text{ and }\quad ay^2=uby^2=uzay^2y^2,$$
so that $za\leql ay^2\,\j\,za$ in $B$ (and hence in $S$).  Since $S$ is left stable, there exists $s\in S^1$ such that $ay^2=sza.$  It follows that $b=zsza\in(BS^1B)a\subseteq Ba.$  We conclude that $a\,\l\,b$ in $B,$ as required.
\end{proof}

The remainder of this section is concerned with Rees quotients and ideal extensions.  The following lemma is easy to prove.

\begin{lem}\label{lem:RQ,stab}
Let $S$ be a semigroup and $I$ an ideal of $S.$  Then the semigroup $S/I$ is (left) stable if and only if the $S$-biact ${_S}S_S/{_S}I_S$ is (left) stable.
\end{lem}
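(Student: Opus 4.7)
The plan is to invoke Lemma~\ref{lem:quotient} with $\rho$ taken to be the Rees congruence $\rho_I$ associated with the ideal $I$: this congruence has classes $\{s\}$ for $s \in S \sm I$ together with the single class $I,$ so that $S/\rho_I$ is exactly the semigroup $S/I$ and ${_S}S_S/\rho_I$ is exactly the $S$-biact ${_S}S_S/{_S}I_S.$

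From Lemma~\ref{lem:quotient} we then obtain, for each $\k \in \{\l, \r, \j\}$ and each pair $a, b$ in the common underlying set $(S \sm I) \cup \{0\},$ the equivalence of $a \leqk b$ in the semigroup $S/I$ with $a \leqk b$ in the $S$-biact ${_S}S_S/{_S}I_S.$  In particular, the equivalences $\l$ and $\j$ on the two structures coincide.

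It remains to reconcile the apparent mismatch in the definitions of left stability: the semigroup version quantifies the left multiplier over $S/I,$ whereas the biact version quantifies it over $S.$  This is handled by noting that for each $a \in (S \sm I) \cup \{0\},$ the biact product $s \cdot a$ with $s \in S$ agrees with the product $[s]_{\rho_I} \cdot a$ computed in $S/I,$ and conversely every element of $S/I$ arises as some $[s]_{\rho_I}$ via the surjective projection $S \to S/I.$  Hence $\{s \cdot a : s \in S\}$ in the biact coincides with $\{x \cdot a : x \in S/I\}$ in the semigroup, and combined with the coincidence of $\l$ and $\j$ this yields that left stability of one structure is equivalent to left stability of the other.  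A dual argument handles right stability, and two-sided stability follows immediately.

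The main obstacle is purely bookkeeping: tracking the two different types of Rees quotient (semigroup versus biact) and keeping the quantifier ranges straight. Once Lemma~\ref{lem:quotient} is in hand, everything reduces to the observation that the quotient map $S \to S/I$ is surjective, so no genuine technical difficulty arises.
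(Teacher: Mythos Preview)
Your proof is correct. The paper omits the proof of this lemma entirely (merely noting it is ``easy to prove''), and your argument via Lemma~\ref{lem:quotient}, together with the observation that the quotient map $S\to S/I$ is surjective so that the two quantifier ranges produce the same set of left multipliers, is precisely the natural route and almost certainly what the author had in mind.
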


Combining Proposition \ref{prop:actex,stab} and Lemma \ref{lem:RQ,stab}, we obtain:

\begin{prop}\label{prop:sgpex,stab}
Let $S$ be a semigroup and $I$ an ideal of $S.$  Then $S$ is (left) stable if and only if ${_S}I_S$ and $S/I$ are (left) stable.
\end{prop}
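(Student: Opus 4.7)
The plan is essentially a two-step bookkeeping argument assembling the two cited results, with no new content of its own. First, I would unpack the definitions: by the very definition of (left) stability for semigroups, $S$ is (left) stable if and only if the $S$-biact ${_S}S_S$ is (left) stable, and since $I$ is an ideal of $S,$ the biact ${_S}I_S$ is a subact of ${_S}S_S.$ So the semigroup-level statement we want is really a statement about the biact ${_S}S_S$ and its subact ${_S}I_S.$

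Next, I would invoke Proposition \ref{prop:actex,stab} with $A={_S}S_S$ and $B={_S}I_S.$ This gives the key equivalence: ${_S}S_S$ is (left) stable if and only if both ${_S}I_S$ and the biact Rees quotient ${_S}S_S/{_S}I_S$ are (left) stable. At this point the left-hand side of the biconditional has already been identified with (left) stability of the semigroup $S,$ and the first conjunct on the right is already the desired statement about ${_S}I_S.$

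Finally, to translate the second conjunct into a statement about the semigroup $S/I,$ I would apply Lemma \ref{lem:RQ,stab}, which says precisely that the biact ${_S}S_S/{_S}I_S$ is (left) stable if and only if the Rees quotient semigroup $S/I$ is (left) stable. Substituting this into the equivalence from Proposition \ref{prop:actex,stab} yields the proposition.

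I do not foresee any real obstacle: the whole argument is a one-line composition of Proposition \ref{prop:actex,stab} with Lemma \ref{lem:RQ,stab}, and the ``(left)'' parenthetical lines up in both ingredients since each is proved in both the two-sided and the one-sided version. The only care needed is to keep straight the (essentially trivial) distinction between stability of the semigroup $S/I$ and stability of the biact ${_S}S_S/{_S}I_S,$ which is exactly what Lemma \ref{lem:RQ,stab} mediates.
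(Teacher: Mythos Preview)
Your proposal is correct and follows exactly the paper's approach: the paper's proof is the single line ``Combining Proposition \ref{prop:actex,stab} and Lemma \ref{lem:RQ,stab}, we obtain:'', which is precisely the composition you describe.
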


We conclude this section by showing that a semigroup may not be stable even if both an ideal and the associated Rees quotient are stable.  In order to do so, we introduce a construction similar to Construction \ref{con:S,T;A}.

\begin{con}\label{con}
Let $S$ be a semigroup and $A$ an $S$-biact.  Let $\{x_a : a\in A\}$ be a set in one-to-one correspondence with $A$ and disjoint from $S,$ and let $0$ be an element disjoint from $S\cup\{x_a : a\in A\}.$  Define a multiplication on $U=S\cup\{x_a : a\in A\}\cup\{0\},$ extending that of $S,$ by
$$sx_a=x_{sa},\quad x_as=x_{as},\quad x_ax_b=u0=0u=0\quad\text{for all }\,s\in S,\,a,b\in A,\,u\in U.$$  
It is straightforward to show that $U$ is a semigroup under this multiplication, and we denote it by $\U(S,A).$  Notice that $\{x_a : a\in A\}\cup\{0\}$ is both an ideal of $\U(S,A)$ and a null semigroup.
\end{con}

\begin{prop}\label{prop:con}
Let $S$ be a semigroup, let $A$ be an $S$-biact, and let $U=\U(S,A).$  Then $U$ is (left) stable if and only if $A$ and $S$ are (left) stable.
\end{prop}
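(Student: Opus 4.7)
The plan is to apply Proposition \ref{prop:sgpex,stab} to the ideal $N=\{x_a:a\in A\}\cup\{0\}$ of $U$ identified in Construction \ref{con}.  This reduces the biconditional for left stability to two sub-claims: (i) $U/N$ is left stable if and only if $S$ is left stable; and (ii) the $U$-biact ${_U}N_U$ is left stable if and only if $A$ is left stable.  Right stability follows by the symmetric argument, and two-sided stability then from the two together.

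Sub-claim (i) is straightforward: $U/N$ is isomorphic to $S^0,$ and adjoining a zero only introduces a singleton $\j$-class $\{0\}$ disjoint from the Green's structure of $S,$ so one checks directly from the definition that $S^0$ is left stable if and only if $S$ is.

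For sub-claim (ii), I would exploit the fact that $N$ is null (so any product involving two $N$-factors equals $0$), combined with $s\cdot x_a=x_{sa}$ and $x_a\cdot s=x_{as},$ to compute
$$U^1 x_a=\{x_c : c\in S^1 a\}\cup\{0\}\qquad\text{and}\qquad U^1 x_a U^1=\{x_c : c\in S^1 a S^1\}\cup\{0\}.$$
Hence the bijection $a\mapsto x_a$ is an order isomorphism between the $\leql$- and $\leqj$-preorders on $A$ and their restrictions to $N\setminus\{0\}$ in ${_U}N_U,$ while $0$ forms its own singleton $\l$- and $\j$-class (since $U^1\cdot 0=\{0\}$) sitting strictly below every $x_a.$  Sub-claim (ii) then follows because the left-stability criterion on ${_U}N_U$ splits into the corresponding criterion on $A$ (for pairs of the form $x_a,x_b$) and the trivial case involving $0.$

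The only technical obstacle is the bookkeeping in the calculation of $U^1 x_a$ and $U^1 x_a U^1,$ but this is immediate once one observes that every product in $U$ involving two $N$-factors vanishes.  Given this, everything assembles cleanly via Proposition \ref{prop:sgpex,stab}.
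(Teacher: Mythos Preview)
Your proposal is correct and follows essentially the same approach as the paper: both apply Proposition~\ref{prop:sgpex,stab} to the ideal $N=\{x_a:a\in A\}\cup\{0\},$ observe that $U/N\cong S^0$ to handle sub-claim~(i), and verify that the $\leql$- and $\leqj$-preorders on ${_U}N_U$ correspond (via $a\mapsto x_a$) to those on $A$ for sub-claim~(ii). Your write-up simply spells out the computation of $U^1x_a$ and $U^1x_aU^1$ that the paper leaves as ``straightforward to show.''
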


\begin{proof}
Let $I=\{x_a : a\in A\}\cup\{0\}.$  By Proposition \ref{prop:sgpex,stab}, the semigroup $U$ is (left) stable if and only if ${_U}I_U$ and $U/I$ are (left) stable.  It is straightforward to show that, for $\k\in\{\l,\r,\j\},$ we have $x_a\leqk x_b$ in $U$ if and only if $a\leqk b$ in $S,$ and hence ${_U}I_U$ is (left) stable if and only if $A$ is (left) stable.  Finally, $U/I\cong S^0$ is (left) stable if and only if $S$ is (left) stable.
\end{proof}

\begin{cor}
There exists a semigroup $U$ with an ideal $I$ such that both $I$ and $U/I$ are stable but $U$ is not stable.
\end{cor}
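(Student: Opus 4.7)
My plan is to apply Construction \ref{con} to a carefully chosen pair: a stable semigroup $S$ together with a non-stable $S$-biact $A$. Proposition \ref{prop:con} will then immediately give that $U=\U(S,A)$ is not stable, while the built-in ideal $I=\{x_a:a\in A\}\cup\{0\}$ is a null semigroup and the Rees quotient $U/I$ is isomorphic to $S^0$. So the whole argument reduces to three tasks: (i) exhibit such $S$ and $A$; (ii) verify that the null ideal $I$ is stable; (iii) verify that $S^0$ is stable.

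The crux, and the only real obstacle, is (i): producing a stable semigroup with a non-stable biact. I would take $S=(\N,+)$, which is $\j$-trivial since $a\leql b$, $a\leqr b$ and $a\leqj b$ all reduce to $a\geq b$; hence $\l=\r=\j$ is equality on $S,$ making $S$ vacuously stable. For the biact I would take $A=\Z$ with left action $s\cdot a=s+a$ and right action $a\cdot s=a-s,$ so that associativity $(sa)t=s(at)$ is immediate. A short computation shows that $a\leql b$ in $A$ iff $a\geq b$, and $a\leqr b$ iff $a\leq b,$ so both $\l$ and $\r$ collapse to equality on $A$; on the other hand, combining the two actions yields $\N^1a\N^1=\Z$ for every $a\in A,$ making $\j$ the universal relation. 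In particular, with $s=1$ and $a=0$ we have $sa=1\neq 0=a$ while $sa\,\j\,a,$ so $A$ is not left stable.

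Task (ii) is routine: since $I$ is null, for $a,b\in I$ we have $a\leql b$ iff $a\in\{b,0\},$ with the same description for $\leqr$ and $\leqj$; thus the $\l$-, $\r$- and $\j$-classes of $I$ all coincide (they are $\{0\}$ together with the singletons $\{x_a\}$), so $\l=\j$ in $I$ and hence $I$ is stable by Proposition \ref{prop:lstable}. Task (iii) is easier still: adjoining a zero to the $\j$-trivial semigroup $\N$ yields another $\j$-trivial semigroup, which is again vacuously stable. Proposition \ref{prop:con} then delivers that $U$ is not stable, completing the construction.
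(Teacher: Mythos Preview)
Your proposal is correct and follows the paper's own strategy exactly: apply Construction~\ref{con} and Proposition~\ref{prop:con} to a stable semigroup $S$ together with a non-stable $S$-biact $A$, then observe that the null ideal $I$ is $\j$-trivial (hence stable) and that $U/I\cong S^0$ inherits stability from $S$. The only difference is in the specific witness for the pair $(S,A)$: the paper invokes the example established at the start of Section~\ref{sec:stability} (a free semigroup $S$ with the quotient biact ${_S}S_S/\rho$ for a congruence $\rho$ with $S/\rho$ not stable), whereas you supply the concrete pair $S=(\N,+)$ and $A=\Z$ with actions $s\cdot a=s+a$, $a\cdot t=a-t$; your choice is self-contained and avoids appealing to the existence of non-stable semigroups abstractly.
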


\begin{proof}
Take a stable semigroup $S$ with an $S$-biact $A$ that is not stable.  (The existence of such a semigroup and biact was established at the beginning of this section.)
By Proposition \ref{prop:con}, the semigroup $U=\U(S,A)$ is not stable.  On the other hand, the ideal $I=\{x_a : a\in A\}\cup\{0\}$ is $\j$-trivial and hence stable, and the Rees quotient $U/I\cong S^0$ is stable since $S$ is stable.
\end{proof}

\vspace{0.5em}
\section*{Acknowledgements}
This work was supported by the Engineering and Physical Sciences Research Council [EP/V002953/1].  The author thanks the referee for helpful comments.

\vspace{0.5em}

\end{document}